\newtheorem{lemma}{Lemma}[section]
\newtheorem{cor}[lemma]{Corollary}
\newtheorem{conj}[lemma]{Conjecture}
\newtheorem{claim*}{Claim}
\newtheorem{thm}[lemma]{Theorem}
\newtheorem{defn}[lemma]{Definition} 
\newtheorem{question}[lemma]{Question}
\theoremstyle{remark}
\newtheorem{remark}[lemma]{Remark}
\newtheorem{example}[lemma]{Example}
\newcommand{\coker}{\operatorname{coker}}
\newcommand{\im}{\operatorname{im}}
\newcommand{\id}{\operatorname{id}}
\newcommand{\rank}{\operatorname{rank}}
\newcommand{\codim}{\operatorname{codim}}
\newcommand{\GL}{{GL}}
\newcommand{\mf}{\operatorname{MF-rank}}
\newcommand{\mcm}{\operatorname{MCM-rank}}
\newcommand{\defi}[1]{\textsf{#1}} 
\newcommand{\PP}{\mathbb P}
\newcommand{\bR}{\mathbf R}
\newcommand{\ZZ}{\mathbb Z}
\title{Matrix factorizations of generic polynomials}
\author{Daniel Erman}
\address{Department of Mathematics, University of Wisconsin, Madison, WI}
\email{derman@math.wisc.edu}
\begin{document}
\begin{abstract}
We prove that the Buchweitz-Greuel-Schreyer Conjecture on the minimal rank of a matrix factorization holds for a generic polynomial of given degree and strength.  The proof introduces a notion of the secondary strength of a polynomial, and uses a variant of the ultraproduct technique of Erman, Sam, and Snowden.
\end{abstract}

\maketitle
\section{Introduction}

Buchweitz-Greuel-Schreyer have conjectured~\cite[Conjecture A]{bgs}:
 \begin{conj}\label{conj:main}
Let $P$ be a regular local ring with algebraically closed residue field and let $f\in P$.  Let $e:=\lfloor \frac{1}{2}(\codim_{P/f} \operatorname{Sing}(P/f) -2)\rfloor$  where $\operatorname{Sing}(P/f)$ denotes the singular locus of $P/f$.  The minimal rank of a nonfree maximal Cohen-Macaulay module on $P/f$ is $\geq 2^{e}$.
\end{conj}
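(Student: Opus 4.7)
The plan is to recast the conjecture in terms of matrix factorizations and then run an ultraproduct contradiction argument enriched by the new notion of secondary strength. By Eisenbud's correspondence, a nonfree maximal Cohen--Macaulay module of rank $r$ on $P/f$ is equivalent to a reduced matrix factorization $(\varphi,\psi)$ of $f$ of size $r$, i.e.\ $r\times r$ matrices over $P$ with $\varphi\psi=\psi\varphi = f\cdot I_r$. Under the genericity hypothesis on $f$ of degree $d$ and strength large enough that $e = \lfloor(\codim_{P/f}\operatorname{Sing}(P/f)-2)/2\rfloor\geq e_0$, the task becomes: no matrix factorization of $f$ has size $r<2^{e_0}$.

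First, I would argue by contradiction. Suppose that for each $n$ there is a polynomial $f_n$ in a polynomial ring $P_n$ of degree $d$ and strength $\geq s_n$, with $s_n\to\infty$, admitting a matrix factorization $(\varphi_n,\psi_n)$ of size $r<2^{e_0}$. Following the technique of Erman--Sam--Snowden, I would pass to a non-principal ultraproduct in $n$ to produce an ultralimit polynomial $f_\infty$ living in an ultraproduct polynomial ring $P_\infty$, together with an $r\times r$ matrix factorization $(\varphi_\infty,\psi_\infty)$ of $f_\infty$ inherited term-by-term. A suitable ultrapower form of the Ananyan--Hochster theorem guarantees that $f_\infty$ has \emph{infinite} strength, so in particular that $f_\infty$ together with its partial derivatives behaves like a regular sequence in $P_\infty$.

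Next I would introduce secondary strength, intended to measure the complexity of the auxiliary system $\{\varphi_{ij},\psi_{ij}\}$ \emph{relative to} $f$ and its partials. The definition should be arranged so that (i) by perturbing the $f_n$ within their generic locus one may assume every size-$r$ matrix factorization has secondary strength $\to\infty$, and (ii) in the ultraproduct limit, a matrix factorization of an infinite-strength polynomial $f_\infty$ whose entries also have infinite secondary strength admits an elimination or block-splitting move that strictly reduces its size, producing a smaller matrix factorization of a (possibly modified) generic polynomial of the same degree. Iterating this reduction eventually brings $r$ below the known Clifford-algebraic lower bound $2^{e_0}$, yielding a contradiction.

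The main obstacle is (ii): constructing the splitting move and calibrating secondary strength so that its recursion terminates exactly at $2^{e_0}$. Primary strength alone is not enough, since the matrix entries can conspire with $f$ in ways that are invisible to the strength filtration on $P_\infty$. Secondary strength must be robust enough to persist under the ultralimit yet flexible enough that infinite secondary strength forces a decomposition analogous to the Ananyan--Hochster reduction of a small collection of high-strength polynomials to a regular sequence. Getting this interaction right, and then descending the resulting reduction back to the original $f_n$ to conclude the contradiction for generic polynomials, will be the technical core of the argument.
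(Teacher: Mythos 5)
Your overall frame --- translate MCM modules into reduced matrix factorizations via Eisenbud's correspondence, assume a counterexample sequence, and pass to the graded ultraproduct where the matrix factorizations survive entrywise --- is exactly the paper's. But two things go wrong after that, and they are not details. First, you send the \emph{strength} $s_n\to\infty$. The paper fixes the strength $s$ and sends the \emph{secondary} strength to infinity. This matters: if strength goes to infinity, the ultralimit $f_\infty$ becomes an honest variable of the ultraproduct polynomial ring (this is the Erman--Sam--Snowden theorem you invoke), so $P_\infty/f_\infty$ is regular and has no nonfree MCM modules at all; you get a contradiction, but it only tells you something about polynomials of very large strength, for which $e(f)$ is also large and the target $2^{e}$ is unbounded. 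The content of the theorem is about a \emph{fixed} finite strength $s$, where $e(f)\leq s-1$ and the bound $2^{e(f)}$ is sharp. The paper's secondary strength is a property of $f$ itself, not of the matrix factorization: for $f$ of strength $s$ it is the maximal collective strength of the tuple $(g_0,\dots,g_s,h_0,\dots,h_s)$ over all decompositions $f=\sum_{i=0}^s g_ih_i$. When this is infinite in the ultralimit, the $G_i,H_i$ become independent variables, so $F=\sum_{i=0}^s G_iH_i$ \emph{is} the rank-$(2s+2)$ quadric $\sum x_iy_i$ in disguise, and Kn\"orrer's classification (or Buchweitz--Eisenbud--Herzog in characteristic $2$) gives $\mf(F)\geq 2^s$ and $\mcm(F)\geq 2^{s-1}$ in a single step. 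No iterated ``block-splitting'' reduction is needed or performed.

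Second, the step you flag as the technical core --- a secondary strength of the matrix entries $\{\varphi_{ij},\psi_{ij}\}$ calibrated so that infinite secondary strength forces a size-reducing splitting move terminating at $2^{e_0}$ --- is not constructed, and I see no reason it can be: the entries of a minimal matrix factorization of a fixed $f$ are heavily constrained by $f$ and cannot be perturbed independently to have high collective strength, so condition (i) of your plan already fails. The genericity in the paper is imposed on the decomposition data $(g_i,h_i)$ of $f$ (an open condition on $\mathbf S_{d,\mu}$ by a Jacobian-codimension argument), not on the matrix factorization. Finally, be aware that the statement you are addressing is a conjecture that remains open; the paper proves it only for generic $f$ of fixed degree and strength (together with the relation $s\geq e(f)+1$ coming from the product rule applied to the Jacobian ideal), and any correct writeup must make that restriction explicit.
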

A \defi{matrix factorization} of $f\in P$ is a pair $(\phi,\psi) = (\phi\colon F\to G, \psi\colon G\to F)$ where $F$ and $G$ are finite rank free $P$-modules, and where $\phi\circ \psi = f\cdot \operatorname{id}_G$ and $\psi\circ \phi =f\cdot \operatorname{id}_F$. 
Based on the equivalence between matrix factorizations of $f$ and maximal Cohen--Macaulay (MCM) modules on the corresponding hypersurface ring~\cite{eisenbud}, this conjecture is closely related to the study of matrix factorizations of $f$.
Matrix factorizations were introduced by Eisenbud in~\cite{eisenbud}, though they arise in Shamash's~\cite{shamash} and related ideas appear in~\cite{G1}.  
Matrix factorizations now arise in connection with a huge range of topics: for a sampling, see~\cite{aspinwall, avramov-infinite, bfk,MP-infinite,orlov} and the discusssion and references within.

Yet despite this wide literature on matrix factorizations, there has been limited progress on Conjecture~\ref{conj:main}.  Since its origin, the conjecture has been known for quadratic polynomials and simple hypersurface singularities~\cite{knorrer}.  Eisenbud, Peeva, and Schreyer have shown that the conjecture holds for certain generic combinations of quadrics~\cite[Corollary~2.2]{eps}, while Ravindra and Tripathi have proven the conjecture for homogeneous polynomials when $e\leq 2$~\cite{tripathi,rt1}; see also~\cite{beh,blaser-eisenbud-schreyer,mrr1,mrr2,rt2,rt3}.  

Conjecture~\ref{conj:main} has both algebraic and geometric motivations.  Buchweitz, Greuel, and Schreyer produced certain explicit matrix factorizations from Koszul complexes~\cite[\S2.1]{bgs}, and their conjecture roughly proposes that these are the smallest possible.  This echoes the Buchsbaum-Eisenbud-Horrocks Conjecture on minimal free resolutions (see~\cite[p. 405]{be} and \cite[Problem 24]{hartshorne-vb})--which  proposes that the Koszul  complex is the smallest possible resolution--as well as related rank conjectures of Carlsson, Avramov-Buchweitz-Iyengar, and Avramov-Buchweitz~\cite{carlsson,AB, ABI}.  In a geometric direction: if $V(f)\subseteq \mathbb P^n$ is a general hypersurface, then $e(f) = \lfloor \frac{n}{2}\rfloor -1$ and the construction from~\cite[\S2.1]{bgs} is built on a nontrivial cycle on the hypersurface; by the Lefschetz Theorem, this cannot happen before the middle codimension.

Many of the conjectures listed above have seen significant recent breakthroughs due to the results of Walker~\cite{walker} and the counterexamples of Iyengar-Walker~\cite{iyengar-walker}.  But Conjecture~\ref{conj:main} has remained untouched by this progress.\footnote{However, in light of the counterexamples of~\cite{iyengar-walker}, Eisenbud, Peeva and Schreyer have used Conjecture~\ref{conj:main} to update the Betti Degree Conjecture of~\cite{AB};  see~\cite[Conjecture~4.2]{eps}.}

We prove that Conjecture~\ref{conj:main} holds for ``generic'' homogeneous polynomials, though we emphasize that our genericity notion is based on a less standard way of parametrizing polynomials.
Instead of fixing the degree and number of variables of $f$, we will fix the degree and the strength of the homogeneous polynomial.  
Recall that the \defi{strength} of $f$ is the minimal $s$ for which we can write $f=\sum_{i=0}^s h_ig_i$ with $1\leq \deg(h_i), \deg(g_i) < \deg(f) - 1$, or $\infty$ if no such decomposition exists.  For example, $x_0y_0+x_1y_1+\cdots + x_sy_s$ has strength $s$.
And while the Buchweitz-Greuel-Schreyer Conjecture was not previously known for any homogeneous polynomial $f$ of degree $d$ with $d>2$ and $e(f) >2$, our results imply that the conjecture holds and is sharp for some polynomial with every $d$ and $e(f)$ value; see Example~\ref{ex:all d and e}.

We define the \defi{rank} of a matrix factorization $(\phi,\psi)$ to be the rank of the free module $F$.  For a homogeneous polynomial $f\in R=k[z_0, \dots, z_n]$, we say that a matrix factorization is \defi{reduced} if all entries belong to the maximal ideal.  We use $\mf(f)$ to denote the minimal rank of a reduced, homogeneous matrix factorization, and we let $e(f):=\lfloor \tfrac{1}{2}\codim_{R/f}( \operatorname{Sing}(R/f) -2)\rfloor$.  The equality $\rank \coker \phi + \rank \coker \psi = \rank F$ relates the rank of the corresponding MCM modules to that of $F$.  Since our proof involves passing to non-noetherian rings, and we want to avoid discussing MCM modules in that setting, we now express the rank of the corresponding MCM module entirely in terms of the matrix factorization.  As we recall in \S\ref{sec:notation}, if $f$ is irreducible, then $\det(\phi)$ is a scalar multiple of $f^r$ where $r$ is the rank of the corresponding MCM module $\coker \phi$.   We thus define
\[
\mcm(f) := \min\left\{ \begin{matrix} r \text{ where  $(\phi,\psi)$ is a reduced, homogeneous matrix} \\ \text{factorization of $f$ with  }\det(\phi) \text{ a scalar multiple of $f^r$} \end{matrix}\right\}.
\]
Using this notation, the Buchweitz-Greuel-Schreyer Conjecture for homogeneous polynomials $f$ is that $\mcm(f) \geq 2^{e(f)}$; and this implies the weaker conjecture that $\mf(f)\geq 2^{e(f)+1}$.  
While $\mcm(f)$ is the subject of Conjecture~\ref{conj:main}, $\mf(f)$ has arisen in connection with other conjectures (see, for example \cite[\S4]{eps}) and so we will consider both invariants.
Since $\mf(f),e(f)$ and $\mcm(f)$ are all invariant under adjoining extra variables (see Lemma \ref{lem:efmf}), we will consider polynomials in $S:=k[z_0,z_1,\dots]$ where $k$ is an algebraically closed field.  Let $\mathbf S_{d,s}$ be the set of homogeneous degree $d$ and strength $\leq s$ polynomials in $S$.  We will show:

\begin{thm}\label{thm:main}
Fix $d\geq 2$ and $s\geq 1$.  There is a dense open subset $(\mathbf S_{d,s})^\circ \subseteq \mathbf S_{d,s}$ where, for any $f\in (\mathbf S_{d,s})^\circ$, we have that  $\mf(f) \geq 2^{e(f)+1}$ and $\mcm(f) \geq 2^{e(f)}$.
\end{thm}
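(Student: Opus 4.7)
The plan is to establish a ``genericity versus structure'' dichotomy for $f \in \mathbf{S}_{d,s}$: first pin down $e(f)$ on a dense open subset, then show that a low-rank reduced matrix factorization of $f$ forces $f$ to have small secondary strength, and finally use an ultraproduct argument to show that a generic $f \in \mathbf{S}_{d,s}$ has unbounded secondary strength.

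\textbf{Step 1.} For $f = \sum_{i=0}^s h_i g_i$ with $1 \leq \deg h_i, \deg g_i < d-1$, every partial $\partial_j f$ lies in the ideal $(h_0,\dots,h_s,g_0,\dots,g_s)$, so $\operatorname{Sing}(R/f) \subseteq V(h_0,\dots,g_s)$; since $f$ itself lies in this ideal, the containment can be read inside $R/f$. For generic choices of the $h_i,g_i$ the $2(s+1)$ forms are a regular sequence, so this cut-out locus has the expected codimension and $e(f)$ takes a specific value (depending only on $s$) on a dense open subset of $\mathbf{S}_{d,s}$. This fixes the target exponents $2^{e(f)+1}$ and $2^{e(f)}$ we must beat.

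\textbf{Step 2.} Translate ``small matrix factorization rank'' into ``small secondary strength.'' Given a reduced matrix factorization $(\phi,\psi)$ with $\phi\psi = f\cdot\id$, each diagonal entry writes $f = \sum_j \phi_{ij}\psi_{ji}$ as a sum of products of low-degree entries; for the $\mcm$ version, the relation $\det(\phi) = c\cdot f^r$ expresses $f^r$ via Laplace expansion as a signed sum of products of entries/minors. The goal is to show that these expressions force the new invariant---the secondary strength of $f$---to be bounded by an explicit function of $\rank F$ (or of $r$). The crucial implication to prove is: if $\mf(f) < 2^{e(f)+1}$ or $\mcm(f) < 2^{e(f)}$, then the secondary strength of $f$ is at most a specific quantity depending only on $e(f)$. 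This inequality is the key new algebraic input of the paper, analogous to the way primary strength controls direct factorability.

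\textbf{Step 3.} Show genericity of large secondary strength via ultraproducts. If the theorem failed on a non-open set, there would be counterexamples $f_n \in \mathbf{S}_{d,s}$ in polynomial rings with a growing number of variables, all admitting a matrix factorization of bounded rank. Following the Erman--Sam--Snowden variant, take the ultraproduct $\widetilde f$ over a nonprincipal ultrafilter; it lives in a non-noetherian polynomial-like ring in infinitely many variables, the matrix factorization passes entrywise to $\widetilde f$, and by Step 2 the secondary strength of $\widetilde f$ is finite. An independent genericity calculation should show that the ultraproduct of generic members of $\mathbf{S}_{d,s}$ has infinite secondary strength, giving the required contradiction and producing the desired open subset $(\mathbf{S}_{d,s})^\circ$.

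The hard part will be Step 2: concocting the right notion of secondary strength so that (i) it is bounded above by an explicit function of matrix factorization rank, with the constants matching the $2^{e(f)}$ and $2^{e(f)+1}$ thresholds rather than something weaker, and (ii) it behaves well under the ultraproduct passage to infinitely many variables, where MCM modules cannot be discussed directly and everything must be phrased in terms of the matrix factorization itself---which is presumably why the theorem is stated using the intrinsic definition of $\mcm(f)$ via $\det(\phi) = c\cdot f^r$. Pinning down the constants to match the conjectured exponents, and separately verifying that generic $f \in \mathbf{S}_{d,s}$ really do exhibit unbounded secondary strength, are the two pieces most likely to require delicate combinatorial and dimension-count work.
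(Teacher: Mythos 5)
Your overall architecture (reduce to the inequality $e(f)\leq s-1$, show generic $f\in\mathbf S_{d,s}$ has large secondary strength, and use an ultraproduct to transfer matrix factorizations to a limit object) matches the paper's strategy in outline, and your Step 1 observation that the partials of $f=\sum h_ig_i$ lie in $(h_0,\dots,g_s)$ is exactly the paper's argument for $e(f)\leq s-1$ --- note, though, that only this inequality is needed, for \emph{every} $f$ admitting such a decomposition, not an exact generic value of $e(f)$. You would also need to handle the fact that $\mathbf S_{d,s}$ is reducible (it is a finite union of irreducible images $\mathbf S_{d,\mu}$ indexed by the degree types $\mu$ of the decomposition), so ``dense open'' must be produced component by component.

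The genuine gap is your Step 2. You leave as a black box the implication ``$\mf(f)<2^{e(f)+1}$ or $\mcm(f)<2^{e(f)}$ forces bounded secondary strength,'' and the mechanism you sketch for it cannot work as described: the diagonal identity $f=\sum_j\phi_{ij}\psi_{ji}$ and the Laplace expansion of $\det(\phi)=cf^r$ only express $f$ (or $f^r$) as a sum of products, which bounds the \emph{primary} strength from above; they give no handle on secondary strength and no lower bound on rank. The actual engine of the proof is entirely different and is missing from your proposal: by the Ananyan--Hochster/Erman--Sam--Snowden results, in the graded ultraproduct ring $\bR$ a sequence $f_j=\sum_{i=0}^s g_{i,j}h_{i,j}$ whose secondary strength tends to infinity has limit $F=G_0H_0+\cdots+G_sH_s$ where the $G_i,H_i$ are honest independent variables of the polynomial ring $\bR$; thus $F$ is (after the substitution) the quadric $x_0y_0+\cdots+x_sy_s$, and Kn\"orrer's classification (or Buchweitz--Eisenbud--Herzog in characteristic $2$) forces any reduced matrix factorization of $F$ to have rank $\geq 2^s$ and $\det(\Phi)=F^r$ to have $r\geq 2^{s-1}$. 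The ultraproduct is used to transport a hypothetical low-rank factorization of the $f_j$ \emph{up} to a low-rank factorization of the quadric $F$ (after fixing the rank and entry degrees on a neighborhood of $\ast$), contradicting Kn\"orrer --- not, as in your Step 3, to show that generic members have infinite secondary strength. That genericity statement is instead a finite-dimensional semicontinuity argument (Jacobian minors and properness of the universal family over $\mathbf S_{d_1}^{(n)}\times\cdots\times\mathbf S_{d_r}^{(n)}$), which requires no ultraproducts. Without Kn\"orrer's theorem as the source of the lower bound, your proposal has no way to produce the exponents $2^s$ and $2^{s-1}$.
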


The main idea behind the proof is a spin on the \defi{Ananyan--Hochster Principle} which was implicitly at the heart of~\cite{ananyan-hochster} and followup works like~\cite{stillman,hartshorne}; see~\cite{bulletin} for an expository discussion of the principle, which we briefly review.  The \defi{collective strength} of homogeneous polynomials $f_1, \dots, f_r \in k[z_0,z_1, \dots ]$ is the minimal strength of a $k$-linear combination of $f_1, \dots, f_r$.  The Ananyan--Hochster Principle says: {\em homogenous polynomials $f_1, \dots, f_r$ of sufficiently high collective strength will behave increasingly like independent variables} $x_1, \dots, x_r$.  We use a variant of this principle based on the following definition:
\begin{defn}\label{defn:secondary strength}
If $f$ is a homogeneous polynomial of strength $s$, then the \defi{secondary strength of $f$} is the maximal collective strength of $g_0, g_1, \dots, g_s, h_0, h_1, \dots, h_s$ where $f=\sum_{i=0}^s g_ih_i$.
\end{defn}

For instance, in characteristic $0$, the power sum $g_{d,n}:=\sum_{i=0}^n z_i^d$ has strength $\geq \frac{n-1}{2}$.\footnote{We do not know the precise strength of $g_{d,n}$ but this inequality follows from two simple computations.  First, $g_{d,n}$ defines a smooth hypersurface in $\PP^n$.  Second, if $f$ is a polynomial of strength $s$, then the corresponding hypersurface is singular in codimension $\leq 2s+1$.} Thus $g_{1,n}g_{6,n}+g_{2,n}g_{5,n}+g_{3,n}g_{4,n}$ has strength $2$ but secondary strength at least $\frac{n-1}{2}$.

Following the logic of the Ananyan--Hochster Principle: as the {\em secondary} strength of $f$ goes to infinity, the $g_i$ and $h_i$ will behave increasingly like independent variables, and so $f$ should behave increasingly like the quadratic form $\sum_{i=0}^s x_iy_i$.  In other words, we propose:
\begin{equation}\label{eqn:secondary strength principle}
\begin{matrix}
\text{{\em A polynomial $f$ of strength $s$ will behave increasingly like}}\\ \text{\em{ a quadratic form of rank $2s+2$ as the secondary strength $\to \infty$.}}
\end{matrix}
\end{equation}
This is the central idea behind Theorem~\ref{thm:main}.    If $q=\sum_{i=0}^s x_iy_i$ then Kn\"orrer's work implies that $\mf(q)=2^s$ and $\mcm(q)=2^{s-1}$.  We prove that the same holds for any $f$ of high enough secondary strength:

\begin{thm}\label{thm:secondary strength}
Fix $d\geq 2$ and $s\geq 1$.  There exists $N=N(d,s)$ where, for any homogeneous polynomial $f$ of degree $d$, strength $s$, and secondary strength $\geq N$, we have that $\mf(f) =  2^s$ and $\mcm(f) = 2^{s-1}$.
 \end{thm}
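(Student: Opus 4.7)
The upper bounds $\mf(f) \leq 2^s$ and $\mcm(f) \leq 2^{s-1}$ hold for every strength-$s$ polynomial $f$, independently of secondary strength. Given any strength-realizing decomposition $f = \sum_{i=0}^s g_i h_i$, I would substitute $x_i \mapsto g_i,\ y_i \mapsto h_i$ into Kn\"orrer's Clifford-algebra matrix factorization of the quadric $Q = \sum_{i=0}^s x_i y_i$ of rank $2^s$. Since the $g_i, h_i$ all have positive degree, the resulting $(\phi, \psi)$ is a reduced homogeneous matrix factorization of $f$ of rank $2^s$ with $\det(\phi)$ a scalar multiple of $f^{2^{s-1}}$.

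The real content is the matching lower bound, which I would prove by contradiction using an ultraproduct argument in the spirit of Erman--Sam--Snowden. If no $N(d,s)$ works, then for each $m \in \mathbb{N}$ there is $f_m$ of degree $d$, strength $s$, and secondary strength $\geq m$ that violates either $\mf \geq 2^s$ or $\mcm \geq 2^{s-1}$. For each $m$, fix both an offending reduced homogeneous matrix factorization $(\phi_m, \psi_m)$ and a secondary-strength-realizing decomposition $f_m = \sum_{i=0}^s g_{i,m} h_{i,m}$ whose $2s+2$ factors have collective strength $\geq m$. Fixing a nonprincipal ultrafilter on $\mathbb{N}$, form the ultraproduct $S_\infty$ of the polynomial ring $S = k[z_0, z_1, \ldots]$, and pass to ultralimits $f_\infty$, $g_{i,\infty}$, $h_{i,\infty}$, and $(\phi_\infty, \psi_\infty)$.

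The heart of the argument is the structural claim that a sequence of homogeneous elements of infinite collective strength in $S_\infty$ behaves like an algebraically independent set in a strong sense: concretely, there is a $k$-algebra retraction $\pi \colon S_\infty \twoheadrightarrow T = k[x_0, \ldots, x_s, y_0, \ldots, y_s]$ sending $g_{i,\infty} \mapsto x_i$ and $h_{i,\infty} \mapsto y_i$, under which $f_\infty \mapsto Q = \sum x_i y_i$. The matrix factorization identities $\phi\psi = f \cdot \id$, $\psi\phi = f\cdot\id$, the reducedness of entries, and the condition ``$\det(\phi)$ is a scalar multiple of $f^r$'' are all first-order polynomial equations, so they transfer from $(\phi_m, \psi_m)$ to $(\phi_\infty, \psi_\infty)$ over $S_\infty$ and then, applying $\pi$ entrywise, to a reduced homogeneous matrix factorization of $Q$ over $T$ of rank $< 2^s$ (respectively, MCM-rank $< 2^{s-1}$). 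This contradicts Kn\"orrer's sharp computation $\mf(Q) = 2^s$, $\mcm(Q) = 2^{s-1}$.

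The main obstacle is the structural step of the previous paragraph: producing the retraction $\pi$ in the non-Noetherian ring $S_\infty$ from infinite collective strength of the $g_{i,\infty}, h_{i,\infty}$. This is precisely a ``secondary'' incarnation of the Ananyan--Hochster Principle, and I expect it to follow from an ultraproduct adaptation of the strength-based arguments of~\cite{ananyan-hochster} and~\cite{stillman}, once one checks that ``infinite collective strength'' at the ultrafilter level forces these elements to extend to a polynomial coordinate system on a quotient of $S_\infty$. A secondary concern is verifying that $\pi$ preserves the reduced condition and the exact exponent on $\det(\phi)$, so that the descended matrix factorization of $Q$ genuinely contradicts Kn\"orrer.
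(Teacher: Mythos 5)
Your proposal is correct and follows essentially the same route as the paper: specialize the Clifford-algebra factorization of $\sum x_iy_i$ for the upper bounds, then argue the lower bounds by contradiction via a graded ultraproduct, using that elements of infinite collective strength become genuine variables in the ultraproduct polynomial ring so that $f_\infty$ is literally a quadric of rank $2s+2$, and invoking Kn\"orrer (or Buchweitz--Eisenbud--Herzog in characteristic $2$). The ``structural claim'' you flag as the main obstacle is exactly Theorem~1.3 of Erman--Sam--Snowden, which the paper cites; the only cosmetic difference is that you retract onto $k[x_0,\dots,y_s]$ before applying Kn\"orrer, whereas the paper applies the classification directly in the big polynomial ring (and it handles your determinant-scalar concern by rescaling so that $\det\phi=f^r$ on the nose before taking ultralimits).
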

By noting that $s\geq e(f)+1$ and that the locus in $\mathbf S_{d,s}$ of secondary strength $\geq N$ contains a dense open subset, Theorem~\ref{thm:secondary strength} implies Theorem~\ref{thm:main}.   

The upper bound  $\mf(f)\leq 2^s$ was known.  In fact, a decomposition $f=\sum_{i=0}^s g_ih_i$ yields an explicit matrix factorization of $f$ by specializing the rank $2^s$ matrix factorization of the quadric $\sum_{i=0}^s x_iy_i$.  For instance, if $s=1$ then
\[
\begin{pmatrix}
g_0&-g_1\\
h_1 & h_0
\end{pmatrix}
\text{ and } 
\begin{pmatrix}
h_0&g_1\\
-h_1&g_0\end{pmatrix}
\]
is such a matrix factorization of $f=g_0h_0+g_1h_1$.  This specialization argument is similar to the essential idea behind the construction of matrix factorizations in~\cite[\S2.1]{bgs}.   Via a similar idea, one easily also obtains $\mcm(f)\leq 2^{s-1}$.  So the content of Theorem~\ref{thm:secondary strength} are the inequalities $\mf(f) \leq 2^s$ and $\mcm(f) \leq 2^{s-1}$.

To prove these, we apply ultraproduct machinery from~\cite{stillman}.  The first key observation is that ultraproducts play well with constructing matrix factorizations of a fixed rank and with fixed determinant; therefore invariants like $\mf(f)$ and $\mcm(f)$ behave well under passage to ultraproduct limits.  The second key observation is that, in the ultraproduct limit, an element of strength $s$ and infinite secondary strength will essentially equal a quadric of the form $x_0y_0 +  \cdots + x_sy_s$; this is like the limit of the principle from \eqref{eqn:secondary strength principle}, and it is largely a consequence of the results of~\cite{stillman}. Combining these, Theorem~\ref{thm:secondary strength} is then reduced to Kn{\"o}rrer's result about matrix factorizations of quadric polynomials~\cite{knorrer}

\medskip 

Since Ulrich modules are a special type of maximal Cohen-Macaulay modules, Theorem~\ref{thm:main} also trivially implies that the same lower bound holds for Ulrich modules on the hypersurface ring $k[z_0, \dots, z_n]/f$, or equivalently for Ulrich sheaves on $V(f)$.  In particular, Theorem~\ref{thm:main} shows that the conjecture of Bl\"aser-Eisenbud-Schreyer~\cite[Conjecture~0.3]{blaser-eisenbud-schreyer} on the Ulrich complexity of a hypersurface is also true for generic polynomials.  We add that Ravindra and Tripathi have proven bounds on Ulrich bundles on general hypersurfaces in interesting cases, including degree $d\geq 4$ hypersurfaces in $\mathbb P^4$~\cite{rt3}.

\medskip

There has been a huge array of recent work on the properties of high strength polynomials with applications to algebra and algebraic geometry~\cite{ah2,bbov,bbov2,bv,bdov,bde,bdde,bdes,cmpv, ds, DES,cohom,HS, hartshorne, kz-high-rank, kz-ranks,kp1,kp2,kp3,snowden-relative} as well as connections with arithmetic questions~\cite{birch,cook-magyar,schmidt,xy,y}.\footnote{As strength is an elementary notion with a long history, terminology is inconsistent.  While the term strength originated in~\cite{ananyan-hochster}, it is sometimes known as Schmidt's ``$h$-invariant'' in arithmetic settings; and it predates even Schmidt, appearing at least as early as~\cite{davenport-lewis}.}  By comparison, the central novelty in our proof is to fix the strength but allow the secondary strength to be very large; this raises new Questions like ~\ref{q:unirational} and \ref{q:arithmetic}  

The use of graded ultraproducts of polynomial rings in a fixed number of variables has precursors in~\cite{schoutens, van-den-Dries-schmidt} and more.  Our use of ultraproducts, which requires polynomials in increasing numbers of variables, stems from~\cite{stillman,hartshorne}.  Of course, the use of an ultraproduct means that we obtain no effective bound in Theorem~\ref{thm:secondary strength}.

\begin{remark}
Invariants like $\mf(f)$ and $\mcm(f)$ can only decrease under extension of scalars, so the Buchweitz-Greuel-Schreyer Conjecture for polynomials can be reduced to the algebraically closed case.  But strength and secondary strength can also decrease under extension of scalars.   The algebraically closed assumption is not necessary for the part of Theorem~\ref{thm:secondary strength} on $\mf(f)$; in fact, the proof goes through essentially verbatim if one works with the ultraproduct over $\{k_i[x_1, x_2, \dots ]\}_{i\in \mathbb N}$ where the $k_i$ are arbitrary fields and one applies~\cite[Theorem~6.17]{imperfect}.  For the part on $\mcm(f)$, we do require the algebraically closed condition when reducing to the case where $\deg(\phi) = f^r$ on the nose, and not just up to a scalar.  More substantially, we use the algebraically closed hypothesis in our set-theoretic arguments that the locus in $\mathbf{S}_{d,s}$ of secondary strength $\geq N$ contains an open set, and we thus use that assumption in the passage from Theorem~\ref{thm:secondary strength} to Theorem~\ref{thm:main}.
\end{remark}

\section*{Acknowledgments}  We thank Christine Berkesch, Michael K.~ Brown, David Eisenbud, Jordan Ellenberg, Daniele Faenzi, Yeongrak Kim, Irena Peeva, Joan Pons-Llopis, Claudiu Raicu, R.V. Ravindra, Eric Riedl, Steven V Sam,  Andrew Snowden, Amit Tripathi, and Mark Walker for helpful comments and conversations.

\section{Notation}\label{sec:notation}
We let $k$ be an algebraically closed field of arbitrary characteristic.  Let $S=k[z_0,z_1, \dots]$ with the standard grading and let $\mathfrak m = \langle z_0,z_1,\dots \rangle$ be the homogeneous maximal ideal. We summarize some basic facts about matrix factorizations; details and background can be found in~\cite{eisenbud}.  A \defi{matrix factorization} of $f$ is a pair of graded morphisms $(\phi,\psi) = (\phi\colon F\to G, \psi\colon G\to F)$ where $F$ and $G$ are finite rank free graded $S$-modules, and where $\phi\circ \psi = f\cdot \operatorname{id}_G$ and $\psi\circ \phi =f\cdot \operatorname{id}_F$.  We say that $(\phi,\psi)$ is \defi{reduced} if $\im \phi\subseteq \mathfrak m G$ and $\im \psi \subseteq \mathfrak m F$.  Throughout the remainder of this paper, we focus entirely on reduced matrix factorizations.  We set the \defi{rank of a matrix factorization} to be the rank of $F$, which equals the rank of $G$.
The map $(\phi,\psi) \mapsto \coker \phi$ yields a bijection between reduced matrix factorizations of $f$ and maximal Cohen-Macaulay modules of $S/f$, up to isomorphism.   

The rank of $\coker \phi$ can be determined from properties of the matrix factorization $(\phi,\psi)$.  If $f=gh$ is reducible then $(g)$ and $(h)$ form a rank $1$ matrix factorization, and we will ignore this case from here on.  If $f$ is irreducible then $\det(\phi)$ is a scalar multiple of $f^r$, where $r$ is the rank of $\coker \phi$ over $S/f$; see ~\cite[Remark 2.5(3)]{bgs}.  Since we are over an algebraically closed field, we can rescale $\phi$ so that $\det \phi = f^r$ on the nose, and we will assume this throughout.
\medskip

Our main proof uses the graded ultraproduct ring as defined in~\cite[\S4.1]{stillman}, and we refer the reader to \cite{stillman} for a more detailed discussion or to \cite{bulletin} for an expository treatment.  Let $\mathcal F$ be a non-principal ultrafilter on $\mathbb N$.  Following conventions~\cite{stillman}, we refer to subsets of $\mathcal F$ as {\bf neighborhoods of $\ast$}.  Let $\bR$ denote the graded ultraproduct of $\{k[z_0,z_1,,\dots]\}_{i\in \mathbb N}$, where each polynomial ring is given the standard grading.  
\begin{remark}\label{rmk:up}
The key points about ultrafilters and the ring $\bR$ are the following:
\begin{enumerate}
	\item  Every degree $d$ element  $g\in \bR$ can be represented by a sequence $(g_i)_{i\in \mathbb N}$ of degree $d$ elements $g_i\in S$.  This is a many-to-one relation, i.e. there are many sequences which correspond to the same element.  The equivalence relation is determined by the ultrafilter.  Specifically, if we represent $g$ as $(g_i)$ then we have that $g=0$ if and only if $g_i=0$ for all $i$ in some neighborhood of $\ast$.  
	\item  For any decomposition of $\mathbb N$ into a finite number of disjoint subsets $\mathbb N = \sqcup_{i=1}^r \mathcal N_i$, one of the $\mathcal N_i$ is a neighborhood of $\ast$.
\end{enumerate}
For example, say we have a sequence of elements $(a_i)_{i\in \mathbb N}$ and each $a_i\in S$ is homogeneous of degree $\leq D$.  We can decompose $\mathbb N=\sqcup_{d=0}^D \mathcal N_d$ where $\mathcal N_d$ is the set of indices $i$ such that $\deg(a_i)=d$.  Then one of the $\mathcal N_d$ is a neighborhood of $\ast$.  Thus, for some $d$, we have $\deg(a_i)=d$ for all $i$ in some neighborhood.  We will use this argument several times.
\end{remark}

\section{Basics on $\mathbf S_{d,s}$}\label{sec:topological stuff}
Throughout, $d$ will denote a positive integer of degree $\geq 2$.  We let $\mathbf S_{d}$ be the space of degree $d$ polynomials in $S=k[z_0, z_1, \dots ]$ and let $\mathbf S_{d,s}\subseteq \mathbf S_{d}$ be the sublocus consisting of polynomials of strength $\leq s$.   We will need some basic facts about $\mathbf S_{d,s}$.  We will avoid the more powerful framework of $\GL$-varieties introduced in~\cite{bdes} because those results are (at the moment) only developed in characteristic zero, and because the facts we need are elementary; the interested reader should see~\cite{bdes} as well \cite{draisma} for much more general results on polynomial functors, and~\cite[Theorem~4]{bde} \cite[Theorem 1.9]{kz-high-rank} for results on strength conditions which contain invariant subloci.

Since every element of $S$ of degree $\geq 2$ has finite strength, we have $\mathbf S_d = \bigcup_{s\geq 0} \mathbf S_{d,s}$.  One can also realize $\mathbf S_{d}$ as the direct limit $n\to \infty$ of the affine spaces $\mathbf S^{(n)}_d$ of degree $d$ polynomials in $k[z_0, \dots, z_n]$, with respect to the natural inclusions $k[z_0, \dots, z_n] \subseteq k[z_0, \dots, z_{n+1}]$; we use the Zariski topology on those spaces to induce the direct limit topology on $\mathbf S_d$ and on $\mathbf S_{d,s}$.

\begin{remark}
We choose to the space $\mathbf S_{d}$ of degree $d$ polynomials in $k[z_0,z_1,\dots]$, which is a direct limit of the $\mathbf S_{d}^{(n)}$ as $n\to \infty$.  One can also naturally construct an inverse limit of the $\mathbf S_{d}^{(n)}$, and that would lead to studying degree $d$ elements in the inverse limit of polynomial rings that arises in~\cite[\S1.1]{stillman}.  Since ~\cite[Proposition~2.1]{genstillman} allows one to relate the invariant closed subsets in the inverse and direct limits, there is no significant distinction between these two limits for the results we will be using.
\end{remark}

Polynomials of degree $d$ and strength $\leq s$ come in different flavors depending on the degrees of the polynomials arising in the decomposition.  Let $\mu=(\mu_0, \mu_1, \dots, \mu_s)$ be an integer vector satisfying $1\leq \mu_0\leq \mu_1 \leq \cdots \mu_s \leq \lfloor \frac{d}{2}\rfloor$.  Let $f$ be a homogeneous polynomial of degree $d$; a \defi{strength decomposition of type $\mu$} is an expression $f=\sum_{i=0}^s g_ih_i$ where $\deg(g_i)=\mu_i$.  For instance, the polynomial $f=x_0x_1^5 + x_2x_3^4+x_4^3x_5^3$ admits a strength decomposition of type $\mu = (1,2,3)$. We write $\mathbf S_{d,\mu}$ for the subset of $\mathbf S_{d}$ consisting of degree $d$ polynomials that admit a strength decomposition of type $\mu$.  The locus $\mathbf S_{d,\mu}$ is the image of the map
\begin{equation}\label{eqn:mu}
\pi_{\mu}\colon \mathbf S_{\mu_0} \times \mathbf S_{\mu_1} \times \cdots \times \mathbf S_{\mu_s} \times \mathbf S_{d-\mu_0} \times  \mathbf S_{d-\mu_1} \times \cdots \times \mathbf S_{d-\mu_s} \to \mathbf S_{d}
\end{equation}
where $(g_0,g_1, \dots, g_s,h_0,h_1,\dots,h_s)\mapsto \sum_{i=0}^s g_ih_i$.  In particular, $\mathbf S_{d,\mu}$ is irreducible.  

\begin{lemma}\label{lem:strength stuff}
Fix $d\geq 2$ and $s\geq 0$. Any irreducible component of $\mathbf S_{d,s}$ has the form $ \mathbf S_{d,\mu}$ for some $\mu=(\mu_1, \dots, \mu_s)$ with $1\leq \mu_1 \leq \mu_2 \leq \cdots \leq \mu_s \leq \lfloor \frac{d}{2} \rfloor$.
\end{lemma}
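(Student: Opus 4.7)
My plan is to realize $\mathbf S_{d,s}$ as a finite union of irreducible pieces indexed by type $\mu$, and then extract the irreducible components by a standard maximality argument. The starting point is the set-theoretic decomposition
\[
\mathbf S_{d,s} = \bigcup_{\mu} \mathbf S_{d,\mu},
\]
where $\mu = (\mu_0, \dots, \mu_s)$ ranges over the finite set of tuples satisfying $1 \le \mu_0 \le \cdots \le \mu_s \le \lfloor d/2\rfloor$. The inclusion $\supseteq$ is immediate. For $\subseteq$, given $f \in \mathbf S_{d,s}$ with a minimal decomposition $f = \sum_{i=0}^{s'} g_i h_i$ for some $s' \le s$, I plan to swap $g_i \leftrightarrow h_i$ to arrange $\deg g_i \le \deg h_i$, pad with trivial summands (i.e.\ terms of the form $g_i h_i$ with $h_i = 0$ and $\deg g_i$ arbitrary in $[1,\lfloor d/2\rfloor]$) if $s' < s$, and then sort the resulting $\deg g_i$ to produce a valid $\mu$. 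Finiteness of the indexing set is clear since each $\mu_i$ lies in $\{1,2,\dots,\lfloor d/2\rfloor\}$.

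Second, I will note that each $\mathbf S_{d,\mu}$ is irreducible, as already observed in the paragraph preceding the lemma: it is the image of the continuous map $\pi_\mu$ from (\ref{eqn:mu}), whose source is a product of spaces $\mathbf S_e = \varinjlim_n \mathbf S_e^{(n)}$, each a direct limit of irreducible affine spaces. At every finite level $n$, the source is an irreducible affine variety, so its image $\mathbf S_{d,\mu}^{(n)}$ is an irreducible constructible subset of $\mathbf S_d^{(n)}$; passing to the direct limit, $\bigcup_n \overline{\mathbf S_{d,\mu}^{(n)}}$ is a nested union of irreducible closed sets, hence irreducible in the direct-limit topology.

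Combining the two, $\mathbf S_{d,s}$ is a finite union of irreducible subsets, so any irreducible component $C$ has $C \cap \mathbf S_{d,\mu}$ dense in $C$ for some $\mu$, and hence $C \subseteq \overline{\mathbf S_{d,\mu}}$; maximality of $C$ forces $C = \overline{\mathbf S_{d,\mu}} \cap \mathbf S_{d,s}$, and the components are exactly the maximal such pieces under inclusion.

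The step I expect to require the most care is the final identification of $\overline{\mathbf S_{d,\mu}} \cap \mathbf S_{d,s}$ with a stratum $\mathbf S_{d,\mu'}$ on the nose rather than merely with a closure. To handle it, I plan to analyze how a flat family of type-$\mu$ decompositions can degenerate: either a pair $(g_i,h_i)$ develops a common factor of some positive degree, or a summand collapses to zero in the limit, and in both cases the limit still admits a decomposition of some admissible type $\mu'$ (possibly after re-sorting and padding via the trick from the first step). This closedness-within-$\mathbf S_{d,s}$ of the collection of strata $\{\mathbf S_{d,\mu'}\}$ is what allows the closure of any one stratum inside $\mathbf S_{d,s}$ to itself be a finite union of strata, and it completes the identification of $C$ as a single $\mathbf S_{d,\mu}$.
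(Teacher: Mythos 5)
Your proposal is correct and follows essentially the same route as the paper, whose entire proof is the observation that there are only finitely many admissible types $\mu$, that each $\mathbf S_{d,\mu}$ is irreducible as the image of $\pi_{\mu}$, and that their union is $\mathbf S_{d,s}$. Your third step, distinguishing a stratum $\mathbf S_{d,\mu}$ from its closure inside $\mathbf S_{d,s}$, is a genuine subtlety that the paper simply elides (and which is harmless for how the lemma is used, since the main theorem only needs a dense open subset of each $\mathbf S_{d,\mu}$); your sketch of it via degenerations is not fully carried out, but it goes beyond what the paper's own proof attempts.
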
 
\begin{proof}
Given $d$ and $s$, there are only finitely many possibilities for $\mu$, and the union of these $\mathbf S_{d,\mu}$ (each of which is irreducible) equals the space of $\mathbf S_{d,s}$.
\end{proof}

\begin{lemma}\label{lem:coll strength N}
Let $d_1,  \dots, d_r$ be any positive integers.  Let $W_{<N}$ be the  sublocus of $\mathbf S_{d_1}\times \cdots \times \mathbf S_{d_r}$ of tuples $(f_1, \dots, f_r)$ of collective strength $<N$.
Then $W_{<N}$ is contained in a proper Zariski closed subset of $\mathbf S_{d_1}\times \cdots \times \mathbf S_{d_r}$.
\end{lemma}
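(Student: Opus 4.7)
My plan is to cover $W_{<N}$ by finitely many closed incidence-type subsets, bound each via a dimension count, and then assemble compatibly in the direct limit. A tuple $(f_1,\dots,f_r)$ lies in $W_{<N}$ iff, for some common degree $d\in\{d_1,\dots,d_r\}$ and some nonzero $\vec c\in k^{I_d}$ (with $I_d:=\{i:d_i=d\}$), the polynomial $\sum_{i\in I_d}c_i f_i$ admits a strength decomposition of type $\mu=(\mu_0,\dots,\mu_{s-1})$ satisfying $s<N$ and $1\leq \mu_j\leq \lfloor d/2\rfloor$; by Lemma~\ref{lem:strength stuff} the set of admissible pairs $(d,\mu)$ is finite. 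For each such pair and each level $n$, I introduce the incidence
\[
\mathcal I^{(n)}_{\mu,d}:=\left\{([\vec c],(f_i))\in\mathbb P(k^{I_d})\times \mathbf T^{(n)}:\ \sum_{i\in I_d}c_i f_i\in\overline{\mathbf S^{(n)}_{d,\mu}}\right\},
\]
where $\mathbf T^{(n)}:=\prod_i \mathbf S^{(n)}_{d_i}$ and the closure is taken in $\mathbf S^{(n)}_d$. Since $\overline{\mathbf S^{(n)}_{d,\mu}}$ is a cone and the defining condition is scaling-invariant in $\vec c$, $\mathcal I^{(n)}_{\mu,d}$ is a well-defined Zariski closed subvariety, and projecting out the proper factor $\mathbb P(k^{I_d})$ yields a closed $Y^{(n)}_{\mu,d}\subseteq\mathbf T^{(n)}$ whose finite union $Y^{(n)}:=\bigcup_{(d,\mu)}Y^{(n)}_{\mu,d}$ contains $W_{<N}\cap\mathbf T^{(n)}$.

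For the dimension count, note that for each nonzero $\vec c$ the linear evaluation $L_{\vec c}:\mathbf T^{(n)}\to\mathbf S^{(n)}_d,\;(f_i)\mapsto\sum c_i f_i$, is surjective, so by the fiber-dimension theorem
\[
\dim Y^{(n)}_{\mu,d}\leq (|I_d|-1)+\dim\mathbf T^{(n)}-\dim\mathbf S^{(n)}_d+\dim\overline{\mathbf S^{(n)}_{d,\mu}}.
\]
Using the parameterization $\pi_\mu^{(n)}$ from (\ref{eqn:mu}), one gets $\dim\overline{\mathbf S^{(n)}_{d,\mu}}\leq \sum_j\bigl(\binom{n+\mu_j}{\mu_j}+\binom{n+d-\mu_j}{d-\mu_j}\bigr)=O_N(n^{d-1})$, since $1\leq \mu_j\leq d-1$ forces both binomials to be $O(n^{d-1})$, whereas $\dim\mathbf S^{(n)}_d=\binom{n+d}{d}=\Theta(n^d)$. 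Hence $\dim\mathbf S^{(n)}_d-\dim\overline{\mathbf S^{(n)}_{d,\mu}}$ grows like $n^d$ and eventually exceeds $|I_d|-1\leq r-1$, so $Y^{(n)}\subsetneq\mathbf T^{(n)}$ for all $n$ sufficiently large.

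To assemble these into a proper closed subset of $\mathbf T:=\varinjlim\mathbf T^{(n)}$, I use the stabilization
\[
\overline{\mathbf S^{(n)}_{d,\mu}}\cap \mathbf S^{(m)}_d=\overline{\mathbf S^{(m)}_{d,\mu}}\qquad (n\geq m),
\]
proven via the linear retraction $\pi_d:\mathbf S^{(n)}_d\to \mathbf S^{(m)}_d$ that sends $z_{m+1},\dots,z_n$ to $0$: for $f\in\mathbf S^{(m)}_d\cap\overline{\mathbf S^{(n)}_{d,\mu}}$ one has $f=\pi_d(f)\in\pi_d(\overline{\mathbf S^{(n)}_{d,\mu}})\subseteq\overline{\pi_d(\mathbf S^{(n)}_{d,\mu})}\subseteq\overline{\mathbf S^{(m)}_{d,\mu}}$, because $\pi_d$ sends any type-$\mu$ decomposition to one of sub-type (allowing zero summands, which are included in the domain of $\pi_\mu^{(m)}$). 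This stabilization gives $Y^{(n)}_{\mu,d}\cap\mathbf T^{(m)}=Y^{(m)}_{\mu,d}$, so $Y_{\mu,d}:=\bigcup_n Y^{(n)}_{\mu,d}$ meets each $\mathbf T^{(m)}$ in the closed subset $Y^{(m)}_{\mu,d}$, making $Y:=\bigcup_{(d,\mu)}Y_{\mu,d}$ a closed subset of $\mathbf T$ that contains $W_{<N}$ and is proper since $Y\cap\mathbf T^{(m)}\subsetneq\mathbf T^{(m)}$ for all $m$ large. I expect the main obstacle to be precisely this stabilization step: the dimension inequality is a routine estimate once Lemma~\ref{lem:strength stuff} is in hand, but verifying that Zariski closures taken in progressively larger ambient spaces restrict compatibly back to each level requires careful use of the retraction $\pi_d$ and the flexibility of allowing degenerate (zero) terms in the parameter space of $\pi_\mu$.
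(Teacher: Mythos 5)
Your proof is correct, but it takes a genuinely different route from the paper's. The paper never touches the multiplication maps $\pi_\mu$ in this lemma: it converts ``collective strength $<N$'' into the weaker, manifestly algebraic condition that the ideal $J$ of $r\times r$ minors of the Jacobian matrix has codimension $\leq 2N$ (low strength forces the partial derivatives into an ideal with $\leq 2N$ generators), proves closedness of that condition via the universal family $\mathcal U\to \mathbf S_{d_1}^{(n)}\times\cdots\times\mathbf S_{d_r}^{(n)}$ and upper semicontinuity of fiber dimension, and proves properness by exhibiting complete intersections smooth in codimension $>2N$. You instead bound the strength locus itself: you cover $W_{<N}$ by images of incidence varieties fibered over $\mathbb P(k^{I_d})$ whose fibers are linear preimages of $\overline{\mathbf S^{(n)}_{d,\mu}}$, and win by the count $\dim \mathbf S^{(n)}_{d,\mu}=O_N(n^{d-1})$ versus $\dim\mathbf S^{(n)}_d=\Theta(n^d)$. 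Your route is more elementary and characteristic-free (no derivatives, no Bertini) and yields an explicit dimension bound on the closed set containing $W_{<N}$; its price is exactly the bookkeeping you flag: eliminating the coefficient vector $\vec c$ (properness of the projective factor) and checking that the closures $\overline{\mathbf S^{(n)}_{d,\mu}}$ restrict compatibly to lower levels, which your retraction argument handles correctly (the key point being that $\pi_\mu$ is defined on the full affine product, so zero summands are permitted). The paper's Jacobian condition sidesteps both issues, since the codimension of $J$ is automatically insensitive to adjoining variables. One shared informality worth noting: both arguments implicitly read ``$k$-linear combination'' in the definition of collective strength as a combination within a single degree class; this is the standard convention, and your $I_d$ decomposition makes it explicit.
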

\begin{proof}
Write $W^{(n)}_{<N}$ for the locus of tuples $(f_1, \dots, f_r)$ of degrees $d_1, \dots, d_r$  in $k[z_0, \dots, z_n]$ of collective strength $<N$.  Since collective strength is unchanged under adjoining an extra variable, we have $W_{<N} = \cup_n W^{(n)}_{<N}$.  
Write $\mathbf S_{d_i}^{(n)}$ for the space of polynomials of degree $d_i$ in $k[z_0,\dots, z_n]$.
It suffices to show that $W^{(n)}_{<N}$ belongs to a proper, Zariski closed subset of $\mathbf S_{d_1}^{(n)}\times \cdots \times \mathbf S_{d_r}^{(n)}$ for all $n\gg 0$.

Consider the $r \times (n+1)$ Jacobian matrix with $(i,j)$ entry $\frac{\partial f_i}{\partial z_j}$ and let $J$ be the ideal of $r\times r$ minors of this matrix.  This ideal is invariant under replacing $f_1, \dots, f_r$ by elements which generate the same ideal.  So if $f_1, \dots, f_r$ has collective strength $<N$, then some $k$-linear combination of the $f_i$ has strength $<N$; and so we may assume that $f_1$ has strength $<N$ without altering $J$.  Write $f_1 = \sum_{i=0}^{N-1} g_ih_i$. Each partial derivative of $f_1$ belongs to the ideal generated by the $g_i$ and $h_i$ (by the product rule) and thus $J$ also belongs to that ideal.  Thus $J$ has codimension $\leq 2N$ in $S$, because it belongs to an ideal with $\leq 2N$ generators.

We let $Z\subseteq \mathbf S_{d_1}\times \cdots \times \mathbf S_{d_r}$ be the locus of tuples in $k[z_0, z_1, \dots ]$ where this ideal of Jacobian minors $J$ has codimension $\leq 2N$; and we let $Z^{(n)}  \subseteq \mathbf S_{d_1}^{(n)}\times \cdots \times \mathbf S_{d_r}^{(n)}$ be the locus of tuples in $k[z_0, \dots, z_n]$ where this ideal $J$ has codimension $\leq 2N$.  Since the codimension of the ideal $J$ is unchanged under adjoining an extra variable, we have $Z=\cup_n Z^{(n)}$. 

It thus suffices to prove that $Z^{(n)}$ is Zariski closed and nontrivial for $n\gg 0$. We consider the universal family $\mathcal U\subseteq \mathbf S_{d_1}^{(n)}\times \cdots \times \mathbf S_{d_r}^{(n)}\times \mathbb P^n$ consisting of tuples $(f_1, \dots, f_r)$ of degrees $d_1, \dots, d_r$ in $k[z_0, \dots, z_n]$ and points $x\in \mathbb P^n$ such that $x\in V(f_1, \dots, f_r)$.   We have a projection
\[
p\colon \mathcal U \to \mathbf S_{d_1}^{(n)}\times \cdots \times \mathbf S_{d_r}^{(n)},
\]
and $Z^{(n)}$ is the locus in the base where the fiber has codimension $\leq 2N$.  Since $p$ is proper, $Z^{(n)}$ is closed by upper semicontinuity of fiber dimensions, e.g.~\cite[05F6]{stacks}.  For any $n\geq r$, there exist complete intersections $V(f_1, \dots, f_r)$ which are smooth in codimension $n$ (by the Bertini theorem, for example) and so $Z^{(n)}$ is a proper closed subset whenever $n>2N$.  We conclude that $Z$ is a proper, Zariski closed subset which contains $W_{<N}$.
\end{proof}

Finally we turn to the corollary that we will use to connect Theorems~\ref{thm:secondary strength} and \ref{thm:main}.
\begin{cor}\label{cor:open N}
For any $N, d$ and $\mu$, the subset of $f\in \mathbf S_{d,\mu}$ having secondary strength $\geq N$ contains a dense open subset of $\mathbf S_{d,\mu}$. 
\end{cor}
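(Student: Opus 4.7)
The plan is to parameterize $\mathbf{S}_{d,\mu}$ via the morphism $\pi_\mu$ from \eqref{eqn:mu} and transport the genericity of collective strength from its domain to its image.

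First, I would apply Lemma~\ref{lem:coll strength N} to the domain $X_\mu := \mathbf{S}_{\mu_0}\times\cdots\times\mathbf{S}_{d-\mu_s}$, taking the degree tuple $(\mu_0,\ldots,\mu_s,d-\mu_0,\ldots,d-\mu_s)$. This produces a proper Zariski closed subset of $X_\mu$ containing every tuple of collective strength $<N$; its complement $U\subseteq X_\mu$ is dense open, and at each finite level $U^{(n)} \subseteq X_\mu^{(n)}$ is dense open for all $n$ sufficiently large. By construction, any $f\in \pi_\mu(U)$ is a degree $d$, strength $\leq s$ polynomial admitting a type-$\mu$ decomposition $f=\sum_i g_ih_i$ whose tuple $(g_0,\ldots,g_s,h_0,\ldots,h_s)$ has collective strength $\geq N$.

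Next, I would push this forward through $\pi_\mu$ at each finite level. Since $\pi_\mu^{(n)}\colon X_\mu^{(n)}\to \mathbf{S}_d^{(n)}$ is a morphism of affine varieties, its image has irreducible Zariski closure, and by Chevalley's theorem $\pi_\mu^{(n)}(U^{(n)})$ is a constructible subset dense in that closure. Hence $\pi_\mu^{(n)}(U^{(n)})$ contains a dense open subset $V^{(n)}\subseteq \mathbf{S}_{d,\mu}^{(n)}$. Assembled compatibly across $n$, these yield a dense open $V\subseteq \mathbf{S}_{d,\mu}$ in the direct-limit topology such that every $f\in V$ admits a type-$\mu$ decomposition of collective strength $\geq N$.

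Finally, to conclude that such an $f$ has secondary strength $\geq N$ in the sense of Definition~\ref{defn:secondary strength}, I need the actual strength of $f$ to equal $s=|\mu|-1$, so that the type-$\mu$ decomposition is a bona fide strength decomposition. The sublocus $\mathbf{S}_{d,\mu}\cap \mathbf{S}_{d,s-1}$ of strictly smaller strength is a finite union of images $\mathbf{S}_{d,\mu'}$ with $|\mu'|<s+1$; if it is a proper subset of the irreducible $\mathbf{S}_{d,\mu}$, I shrink $V$ by removing the closure of this union to finish, while otherwise $\mathbf{S}_{d,\mu}$ is entirely contained in a smaller $\mathbf{S}_{d,\mu'}$ and the corollary reduces to that case by induction on $|\mu|$. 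This last step---matching the type-$\mu$ decomposition to the true strength of $f$---is the main technical point; the induction on $|\mu|$ (or equivalently a dimension count on the fibers of $\pi_\mu$) should close it, while the earlier steps are a routine combination of Chevalley's theorem with Lemma~\ref{lem:coll strength N}.
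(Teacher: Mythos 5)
Your first two steps are the paper's proof: apply Lemma~\ref{lem:coll strength N} to the factors of the domain of $\pi_\mu$, restrict to finitely many variables, and use Chevalley's theorem to see that the image of the resulting dense open set is constructible and dense in the irreducible $\mathbf S_{d,\mu}^{(n)}$, hence contains a dense open subset. That part is correct and essentially verbatim what the paper does.

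Your third step raises a point the paper elides: the paper simply declares the secondary-strength-$\geq N$ locus to be the image of the high-collective-strength locus ``by definition,'' whereas Definition~\ref{defn:secondary strength} only counts decompositions whose length is $\operatorname{strength}(f)+1$, so one does need $\operatorname{strength}(f)=s$ before a type-$\mu$ decomposition of collective strength $\geq N$ certifies secondary strength $\geq N$. You are right to flag this, but your resolution of the second case does not close. If $\mathbf S_{d,\mu}\cap\mathbf S_{d,s-1}$ is dense in $\mathbf S_{d,\mu}$, then irreducibility does give $\overline{\mathbf S_{d,\mu}}\subseteq\overline{\mathbf S_{d,\mu'}}$ for a single shorter $\mu'$; but the inductive hypothesis only supplies a dense open $V'\subseteq\mathbf S_{d,\mu'}$, and when $\overline{\mathbf S_{d,\mu}}$ is a proper closed subset of $\overline{\mathbf S_{d,\mu'}}$ such a $V'$ may miss $\mathbf S_{d,\mu}$ entirely (it could lie in the complement of $\overline{\mathbf S_{d,\mu}}$), so nothing about $\mathbf S_{d,\mu}$ is recovered. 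The cleaner route is to rule this case out rather than induct into it: for a decomposition of collective strength $\geq N\gg 0$ one shows that $\operatorname{Sing}(V(f))$ has codimension exactly $2s+1$ in $V(f)$, and since strength $s'$ forces that codimension to be at most $2s'+1$, one gets $\operatorname{strength}(f)=s$ on a dense open set. Alternatively, observe that in its one application (the proof of Theorem~\ref{thm:main}) the corollary is only used to guarantee that a generic $f\in\mathbf S_{d,\mu}$ admits \emph{some} type-$\mu$ decomposition of collective strength $\geq N$, which is exactly what your first two steps already deliver.
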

\begin{proof}
Let $U\subseteq  \mathbf S_{d,\mu}$ be the locus of $f$ of secondary strength $\geq N$.  Let $U'$ be the dense open subset of $\mathbf S_{d_1}\times \cdots \times \mathbf S_{d_r}$ constructed in Lemma~\ref{lem:coll strength N} which misses the locus $W_{<N}$ of collective strength $<N$.  Then $U$ is, by definition, the image of $U'$ under the map $\pi_{\mu}$ from \eqref{eqn:mu}.  So it suffices to check that $\pi_{\mu}(U')$ contains a dense open subset.   As in the previous argument, we can check this after restricting to a polynomial ring involving a finite number of variables.
 
 Let $\mathbf S_d^{(n)}$ as defined in the proof of Lemma~\ref{lem:coll strength N} and let $\mathbf S^{(n)}_{d,\mu}$ be the intersection of $\mathbf S_{d,\mu}$ and $\mathbf S^{(n)}_d$.  In other words, $\mathbf S_{d,\mu}^{(n)}$ is the set of degree $d$ polynomials in $k[z_0, \dots, z_n]$ which admit a strength decomposition of type $\mu$.  Let $U'^{(n)} = U' \cap \left( \mathbf S_{d_1}^{(n)}\times \cdots \times \mathbf S_{d_r}^{(n)}\right)$ and let $\pi_\mu^{(n)}$ be the natural analogue of the map $\pi_\mu$.  We have that $U = \cup_n \pi_{\mu}^{(n)} (U'^{(n)})$ and so it suffices to show that $\pi_{\mu}^{(n)} (U'^{(n)})$ contains a dense open subset of $\mathbf S_{d,\mu}^{(n)}$ for $n\gg 0$.  The proof of Lemma~\ref{lem:coll strength N} showed that $U'^{(n)}$ contains a dense open subset for $n\gg 0$, and so $\pi_{\mu}^{(n)} (U'^{(n)})$ is a dense and constructible subset of $\mathbf S_{d,\mu}^{(n)}$ and thus contains a dense open subset. 
\end{proof}

\section{Proof of Main Results}

\begin{lemma}\label{lem:efmf}
Let $f\in k[z_0, \dots, z_n]$.  Neither $e(f)$ nor $\mf(f)$ nor $\mcm(f)$ changes if we view $f\in S=k[z_0, z_1, \dots]$.
\end{lemma}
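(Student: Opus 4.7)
The plan is to handle the three invariants in parallel, treating $e(f)$ separately from the matrix-factorization invariants $\mf(f)$ and $\mcm(f)$.

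For $e(f)$, I would observe that the singular locus of $R/f$ (where $R = k[z_0, \dots, z_n]$) is cut out by the Jacobian ideal $J_R = \langle \partial f/\partial z_0, \dots, \partial f / \partial z_n \rangle$, while the singular locus of $S/f$ is cut out by $J_S = \langle \partial f/\partial z_i \mid i \geq 0 \rangle$. Since $f$ involves only $z_0, \dots, z_n$, the partials $\partial f/\partial z_i$ vanish for $i > n$, so $J_S = J_R \cdot S$. Because codimension is preserved under adjoining polynomial variables (i.e., $\codim_R I = \codim_{R[x]} IR[x]$ and by a direct limit), $\codim_{R/f} V(J_R) = \codim_{S/f} V(J_S)$, giving equality of $e(f)$.

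For $\mf(f)$ and $\mcm(f)$, I would prove both inequalities. The $\leq$ direction is immediate: any reduced, homogeneous matrix factorization $(\phi,\psi)$ over $R$ extends via the inclusion $R \hookrightarrow S$ to a reduced, homogeneous matrix factorization over $S$ of the same rank, and the determinant condition $\det(\phi) = f^r$ is preserved verbatim. So $\mf_S(f) \leq \mf_R(f)$ and $\mcm_S(f) \leq \mcm_R(f)$.

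For the reverse inequality, the key observation is that a reduced homogeneous matrix factorization over $S$ has only finitely many entries, each involving only finitely many variables; thus there is some $N \geq n$ for which $(\phi,\psi)$ has entries in $k[z_0, \dots, z_N]$. I would then specialize by setting $z_{n+1} = \cdots = z_N = 0$ to obtain a pair $(\bar\phi, \bar\psi)$ with entries in $R$. The specialization preserves homogeneity, preserves the matrix-factorization identities $\phi\psi = f\cdot\id$ (since $f$ is fixed under the specialization), preserves the rank of the free modules, and preserves the reduced condition (entries in $\langle z_0, \dots, z_N\rangle$ map to entries in $\langle z_0, \dots, z_n\rangle$). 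Finally, since $\det$ commutes with ring homomorphisms on square matrices, $\det(\bar\phi) = \det(\phi)|_{z_{n+1} = \cdots = z_N=0} = f^r$, so the $\mcm$-normalization also survives. This yields $\mf_R(f) \leq \mf_S(f)$ and $\mcm_R(f) \leq \mcm_S(f)$, completing the proof.

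I don't anticipate any real obstacle here — everything reduces to the elementary fact that the construction of matrix factorizations and the Jacobian ideal are both compatible with the ring map $R \to S$ and with specialization in the opposite direction — but the small point requiring attention is to confirm that the reduced hypothesis is genuinely preserved under specialization, which it is because killing variables only shrinks the image of each entry.
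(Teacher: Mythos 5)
Your proposal is correct and follows essentially the same route as the paper: invariance of $e(f)$ via preservation of codimension under adjoining variables, the forward inclusion of matrix factorizations, and the reverse direction via the specialization $z_i \mapsto 0$ for $i > n$ (the paper phrases this directly as the quotient map $S \to k[z_0,\dots,z_n]$ rather than first restricting to finitely many variables, but the content is identical). The checks you flag --- that reducedness, homogeneity, and the determinant normalization survive specialization --- are exactly the points the paper's proof relies on.
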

\begin{proof}
The extension $k[z_0, \dots, z_n] \subseteq S=k[z_0,z_1, \dots]$ preserves the codimension of ideals; a detailed proof of this fact is in~\cite[Proposition~3.3]{stillman}.  Since $e(f)$ is defined in terms of the codimension of the Jacobian ideal $J=\langle \frac{\partial f}{\partial z_i} | i=0,1,\dots, n\rangle$ it follows that $e(f)$ is invariant upon passage to $S$.  Turning now to $\mf(f)$: if we have a reduced matrix factorization $(\phi,\psi)$ of $f$ over $k[z_0, \dots, z_n]$ then the same matrices yield a reduced matrix factorization of $f$ over $S$.  Conversely, consider a reduced matrix factorization $(\phi,\psi)$ of $f$ over $S$.  Let $(\overline{\phi},\overline{\psi})$ be the images of those matrices under the  quotient map $S\to k[z_0,\dots,z_n]$ where $z_i=0$ for $i>n$.  Then  $(\overline{\phi},\overline{\psi})$ is a reduced matrix factorization of $f$ over $k[z_0, \dots, z_n]$.  It follows $\mf(f)$ is the same over $k[z_0, \dots, z_n]$ or $S$.  Since $\det(\phi)=f^r$ in $S$ if and only if $\det(\overline{\phi})=f^r$ in $k[z_0, \dots, z_n]$, we obtain the desired invariance of $\mcm(f)$.
\end{proof}

\begin{proof}[Proof of Theorem~\ref{thm:secondary strength}]
In the introduction, we noted that we have $\mf(f)\leq 2^s$ and $\mcm(f)\leq 2^{s-1}$; while the essential idea is that we can simply specialize the matrix factorization of the quadric $x_0y_0 + \cdots +x_sy_s$, we provide details here for completeness.  For any $t$, we choose any homogeneous polynomials $g_0,\dots,g_t$ and $h_0, \dots, h_t$ where $\deg(g_i)+\deg(h_i)=d$ for all $i=0,1,\dots, t$; we will build a reduced, homogeneous matrix factorization of $g_0h_0+ \dots + g_th_t$.  For $t=0$ we set $F_0=S$ and $B_0=S(-\deg g_0)$ and we define $\alpha_0=(g_0)$ and $\beta_0=(h_0)$.  We have that $F_0(-d) \overset{\beta_0}{\to} G_0 \overset{\alpha_0}{\to}F_0$ is then a homogeneous, reduced matrix factorization of $g_0h_0$.  For $t>1$, we inductively define $F_t=F_{t-1}\oplus G_{t-1}(\deg h_t)$ and $G_t =G_{t-1} \oplus F_{t-1}(-\deg g_t)$, and we define $\alpha_t$ and $\beta_t$ as 
\[
\bordermatrix{&G_{t-1}&F_{t-1}(-\deg g_t)\cr
F_{t-1}&\alpha_{t-1} & g_t \cdot \id \cr
G_{t-1}(\deg h_t)&h_t \cdot \id & -\beta_{t-1}\cr
}
\ \ \text{ and }\ \ 
\bordermatrix{&F_{t-1}&G_{t-1}(-\deg g_t)\cr
G_{t-1}&\beta_{t-1} & g_t \cdot \id \cr
F_{t-1}(\deg h_t)&h_t \cdot \id & -\alpha_{t-1}\cr
}.
\]
By induction, one verifies that: the product of $\alpha_t$ and $\beta_t$ is $g_0h_0+ \dots + g_th_t$ times the identity matrix; the above matrices induce homogeneous maps; and the ranks of $F_t$ and $G_t$ are $2^t$.  Since each nonzero entry of $\alpha_t$ or $\beta_t$ is one of the $g$'s or $h$'s, this matrix factorization is also reduced, and we have thus produced a homogeneous, reduced matrix factorization of $g_0h_0+ \dots + g_th_t$ of rank $2^t$ for all $t$.  This shows that if $f$ has strength $s$, then $\mf(f)$ is at most $2^s$.  Moreover, since $\det(\alpha_t)=\det(\beta_t)$ and $\det(\alpha_t\beta_t)=f^{\rank F_t}=f^{2^t}$ we conclude that $\det(\alpha_t)=f^{2^{t-1}}$.  It follows that if $f$ has strength $s$ then $\mcm(f)$ is at most $2^{s-1}$.

We now turn to showing that there is some $N$ where the following holds: if the collective strength of $g_0,\dots,g_s,h_0,\dots,h_s$  is at least $N$, then $\mf(f)\geq 2^s$.  Assume for contradiction that this is not the case.  Then we can choose a sequence of collections of polynomials $\mathcal P_j :=(g_{0,j}, \dots g_{s,j}, h_{0,j}, \dots h_{s,j})$ with $j\in \mathbb N$ where:
\begin{itemize}
	\item The collective strength of $\mathcal P_j$ goes to infinity as $j\to \infty$ and
	\item $f_j :=\sum_{i=0}^s g_{i,j}h_{i,j}$ has $\mf(f_j)<2^s$.
\end{itemize}
We let $F,G_0, \dots, G_s, H_0, \dots, H_s$ denote the elements in the ultraproduct ring $\bR$ corresponding to the sequences $(f_j), (g_{0,j}), \dots (g_{s,j}), (h_{0,j}), \dots, (h_{s,j})$.  Since there are only finitely many positive integers $m<2^s$, by Remark~\ref{rmk:up}(2), after passing to a neighborhood of $\ast$, we can assume that $\mf(f_j)=m<2^s$ for some fixed $m$ and all $j$ in the chosen neighborhood of $\ast$.  For each $j$ in this neighborhood, we choose any reduced matrix factorization $(\phi_j,\psi_j)$ of $f_j$ of rank $m$.  
Each of the entries of $\phi_j$ and $\psi_j$ must be homogeneous of degree at least $1$ and most $d-1$ (because we started with reduced matrix factorizations); thus by again applying Remark~\ref{rmk:up}(2), after passing to a neighborhood of $\ast$, we can assume that the degree of each entry of $\phi_j$ and $\psi_j$ is indepdendent of $j$.

Now we get to the key point: we can take an ultraproduct of these matrix factorizations $(\phi_j,\psi_j)$ of $f_j$ to obtain a matrix factorization of $F$.  Specifically, we let $\Phi=(\phi_j)$ and $\Psi=(\psi_j)$ be the $m\times m$ matrices whose entries are the ultraproducts of the entries of the $\phi_j$ and $\psi_j$ matrices.  Since $\phi_j\cdot \psi_j = f_j\cdot \operatorname{id}_{S^m}=\psi_j\cdot \phi_j $ for all $j$ in our neighborhood, these equations pass to the ultraproduct and we have $\Phi\cdot \Psi = F\cdot \operatorname{id}_{\bR^m}=\Psi\cdot \Phi.$  In particular, $(\Phi,\Psi)$ is a homogeneous, reduced matrix factorization of $F$ of rank $m<2^s$.

We now produce the contradiction.  The ultrapower ring $\bR$ is a polynomial ring by~\cite[Theorem~1.3]{stillman}. Moreoever, since the collective strength of $\mathcal P_j$ went to infinity, the elements $G_0, \dots, G_s, H_0, \dots, H_s$ have infinite collective strength.  These elements are thus abstract variables in the polynomial ring $\bR$ by~\cite[Theorem~1.3]{stillman}, and we have $F = G_0H_0 + \cdots +G_sH_s$.  The degree of $G_i$ is $\mu_i$ and the degree of $H_i$ is $d-\mu_i$.  But the classification of matrix factorizations of quadrics from~\cite{knorrer} does not make use of the grading, and so Kn{\"o}rrer's classification (when the characteristic of $k$ is not $2$) implies that $\mf(F) \geq 2^s$.  When the characteristic of $k$ is $2$, one can instead apply~\cite[Proposition 3.2]{beh} to obtain $\mf(F)\geq 2^s$.  We have thus contradicted the conclusion of the previous paragraph.

For the statement about $\mcm(f)$, the argument will be quite similar.  We must show that there is some $N$ such that if the collective strength of $g_0,\dots,g_s,h_0,\dots,h_s$  is at least $N$, then $\mcm(f)\geq 2^{s-1}$.  Assume that this is not the case.  Then we can choose a sequence of collections of polynomials $\mathcal P_j :=(g_{0,j}, \dots g_{s,j}, h_{0,j}, \dots h_{s,j})$ with $j\in \mathbb N$ where:
\begin{itemize}
	\item The collective strength of $\mathcal P_j$ goes to infinity as $j\to \infty$ and
	\item $f_j :=\sum_{i=0}^s g_{i,j}h_{i,j}$ has $\mcm(f_j)<2^{s-1}$.
\end{itemize}
The latter condition implies that, for each $j$, there is a matrix factorization $(\phi_j, \psi_j)$ of $f_j$ where $\det(\phi_j)=f_j^{r_j}$ and $r_j<2^{s-1}$.  From here, the argument is nearly identical to the one above: by passing to a neighborhood of $\ast$, we can produce homogeneous matrices $(\Phi, \Psi)$ over $\bR$ where $\det(\Phi)=F^r$ for some $r<2^{s-1}$.  But $F$ is a quadric in the polynomial ring $\bR$, and this would contradict the known results for quadrics~\cite{knorrer,beh}.
\end{proof}

\begin{proof}[Proof of Theorem~\ref{thm:main}]
By Lemma~\ref{lem:strength stuff} it suffices to prove the claim for an open subset of $\mathbf S_{d,\mu}$ for each $\mu=(\mu_0, \dots, \mu_s)$ satisfying $1\leq \mu_0\leq \mu_1 \leq \cdots \mu_s \leq \lfloor \frac{d}{2}\rfloor$.    In other words, we may assume that $f$ has a decomposition $f=\sum_{i=0}^s g_ih_i$ with $\deg(g_i)=\mu_i$ and prove that the claim holds for a generic such $f$.

By a simple application of the product rule, the Jacobian ideal $\langle \frac{\partial f}{\partial z_i} | i = 0, 1, \dots \rangle$ belongs to the ideal $\langle g_0, \dots, g_s, h_0, \dots, h_s\rangle$; this ideal has codimension $\leq 2s+2$ in $S$ by the Principal Ideal Theorem, and thus it has codimenion $\leq 2s+1$ in $S/f$.   It follows that the codimension of the singular locus of $S/f$  in $S/f$ is at most $2s+1$ and thus that $s \geq e(f)+1$. It therefore suffices to prove $\mf(f)\geq 2^s$.

Theorem~\ref{thm:secondary strength} implies that there is some $N$ such that: $\mf(f) \geq 2^s$ and $\mcm(f)\geq 2^{s-1}$ for any $f\in \mathbf S_{d,\mu}$ where the collective strength of $g_0,\dots,g_s,h_0,\dots,h_s$ is at least $N$, and Corollary~\ref{cor:open N} implies the locus of secondary strength $\geq N$ in $\mathbf S_{d,\mu}$ contains a dense open subset.   Since $s\geq e(f)+1$, this implies the desired result.
\end{proof}

\begin{example}\label{ex:all d and e}
Theorem~\ref{thm:secondary strength} implies that, for any given $d$ and $e$, there is a polynomial $F$ of degree $d$ and $e(F)=e$ where the Buchweitz-Greuel-Schreyer Conjecture holds and is sharp.  Let $F=\sum_{i=0}^s x_ig_i$ where each $g_i\in k[y_{i,0}, y_{i,1}, \cdots, y_{i,N}]$ is a generic polynomial of degree $d-1$ for some $N\gg 0$.  
Since the $x$'s and the $g$'s all involve different set of variables, we can assume that the secondary strength of $F$ is at least the strength of $g_0$, which is at least $\frac{N-1}{2}$ because $V(g_i)\subseteq \PP^N$ is smooth.   So Theorem~\ref{thm:secondary strength} applies to $F$ for $N\gg 0$.  We always have $s\leq e(F)+1$ and we must check that $s=e(F)+1$.  We omit the details, but the basic point is that the Jacobian ideal of $F$ contains $g_0, \dots, g_s$ and--after taking ideal quotients with respect to the Jacobian ideals of $g_0, \dots, g_s$--it contains $x_0, \dots, x_s$ as well. To give a specific example in characteristic $0$, we could let $g_i = y_{i,0}^{d-1} + \cdots + y_{i,N}^{d-1}$ for $0\leq i \leq s$.
\end{example}

Theorem~\ref{thm:main} has an implication for arithmetically Cohen-Macaulay vector bundles on hypersurfaces, as noted in~\cite[Conjecture B]{bgs}.
\begin{cor}
There exists $N$ depending on $d$ and $s$ such that: if $f\in k[z_0, \dots, z_n]$ has degree $d$, strength $s$, and secondary strength $\geq N$, then any vector bundle $\mathcal E$ of rank $<2^{s-1}$ on the hypersurface $V(f)\subseteq \mathbb P^n$ which satsifies 
\[
H^i(X,\mathcal E(e)) = 0 \text{ for all } 0<i<n-1 \text{ and all } e\in \ZZ
\]
must be a direct sum of line bundles $\mathcal E = \oplus_j \mathcal O_X(a_j)$.
\end{cor}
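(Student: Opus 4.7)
My plan is to reduce the corollary to the lower bound $\mcm(f) \geq 2^{s-1}$ from Theorem~\ref{thm:secondary strength}, via the standard correspondence between ACM vector bundles on a hypersurface and graded maximal Cohen--Macaulay modules on its coordinate ring. Write $R = k[z_0,\dots,z_n]$, $A = R/f$, and $X = V(f) \subseteq \PP^n$. Given a vector bundle $\mathcal{E}$ on $X$ satisfying the intermediate cohomology vanishing, I would form the graded $A$-module $M := \bigoplus_{e \in \ZZ} H^0(X, \mathcal{E}(e))$. Combining the hypothesized vanishing with the standard identifications between local cohomology of $M$ at $\m$ and sheaf cohomology on $X$ (and using that $\mathcal{E}$ is locally free to handle the top slot via Serre duality), one obtains $\depth_\m M = \dim A = n$. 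Thus $M$ is a graded MCM $A$-module with $\rank M = \rank \mathcal{E}$.

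Next, I would choose $N = N(d,s)$ as in Theorem~\ref{thm:secondary strength}, so that under the hypotheses of the corollary one has $\mcm(f) \geq 2^{s-1}$; by Lemma~\ref{lem:efmf} this bound transfers automatically from $S$ to $R$. Since $\rank M = \rank \mathcal{E} < 2^{s-1} \leq \mcm(f)$, and since by definition every indecomposable nonfree graded MCM $A$-module has rank at least $\mcm(f)$, the graded Krull--Schmidt decomposition of $M$ into indecomposables cannot contain any nonfree summand. Hence $M \isom \bigoplus_j A(-a_j)$ for some integers $a_j$, and sheafifying yields the desired $\mathcal{E} \isom \bigoplus_j \mathcal{O}_X(-a_j)$.

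I do not anticipate any serious obstacle, as the corollary is essentially a translation of the $\mcm$ part of Theorem~\ref{thm:main} through the ACM/MCM dictionary. The only points to verify carefully are the depth computation for $M$ (which uses the intermediate cohomology vanishing together with the locally free hypothesis on $\mathcal{E}$) and the rank compatibility between $\mathcal{E}$ and $M$; both are routine consequences of the theory surveyed in~\cite{eisenbud}. If there is any mild subtlety, it will be at the bookkeeping step of ruling out that $M$ is only \emph{stably} free rather than free, but this is absorbed into the graded Krull--Schmidt statement above.
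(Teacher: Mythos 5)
Your argument is correct and is exactly the route the paper intends: the corollary is stated without proof as an immediate translation of the bound $\mcm(f)\geq 2^{s-1}$ from Theorem~\ref{thm:secondary strength} through the standard dictionary between ACM bundles on $V(f)$ and graded MCM modules over $k[z_0,\dots,z_n]/f$ (as in \cite{bgs}), which is what you carry out. The only cosmetic point is that the Serre-duality step for the ``top slot'' is unnecessary --- depth $n$ only requires vanishing of $H^i_{\mathfrak m}(M)$ for $i<n$, which follows from saturatedness of $M$ together with the hypothesized intermediate cohomology vanishing.
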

See also~\cite{mrr1,mrr2,rt1,rt2,rt3,tripathi} where a number of cases of ~\cite[Conjecture B]{bgs} are shown for hypersurfaces in $\mathbb P^n$ with $n\leq 6$.

\begin{remark}\label{r:mf}
 Theorem~\ref{thm:secondary strength} shows that strength determines $\mf(f)$ and $\mcm(f)$ for generic $f$, but
Faenzi and Pons--Llopis have pointed out that this cannot hold for abitrary $f$.  For instance, they noted the Pfaffian of a general skew-symmetric $6\times 6$ matrix of linear forms, in $\geq 6$ variables, will have strength $\geq 3$; but the skew-symmetric matrix is part of a rank $6$ matrix factorization, and $6 < 2^3$.  Similarly, the cokernel of the skew-symmetric matrix is a rank $2<2^{3-1}$ MCM module (an Ulrich module, in fact) on the hypersurface.  

This provides a counterexample to a question from a prior draft of this paper and raises a question: if $f\in R =k[x_{1,1}, \dots, x_{n,n}]$ is the determinant of a generic $n\times n$ matrix of linear forms, then $f$ has a matrix factorization of rank $n$, namely, the matrix $(x_{i,j})$ and its adjoint. By Laplace expansion along the top row, one can see that the strength of $f$ is at most $n-1$.  Is the strength equal to $n$?  If so, this would show that $\mf(f)$ and $\mcm(f)$ can be arbitrarily far away from $2^{\operatorname{strength}(f)}$.  A similar question can be asked about Pfaffians.
\end{remark}

The proof technique  raises some new geometric and arithmetic directions to consider.  For instance, if the strength of a polynomial $f$ is sufficiently large, then it is known that $V(f)$ is unirational; this follows from the main result of~\cite{hmp} and various results relating strength with the codimension of the singular locus c.f.~\cite{ananyan-hochster,kz-ranks}.  But what happens if the strength is small--so that $V(f)$ is highly singular--but $f$ has large secondary strength?

\begin{question}\label{q:unirational}
Let $f$ be a degree $d$, strength $s$ polynomial.  If the secondary strength of $f$ is sufficiently large relative to $s,d$, will $V(f)$ be unirational?  For a specific example, consider
\[
f=(x_0 + \cdots +x_n)(x_0^4+\cdots +x_n^4) + (x_0^2+\cdots +x_n^2)(x_0^3+\cdots + x_n^3).
\]
The hypersurface $V(f)$ is singular in codimension $4$ and $f$ has strength $1$, but its secondary strength will go to $\infty$ as $n\to \infty$. If $n\gg 0$, will $V(f)\subseteq \mathbb P^{n-1}$ be unirational?
\end{question}
One can also ask arithmetic questions in this vein.
\begin{question}\label{q:arithmetic}
Let $f\in \mathbb Q[z_0, \dots, z_n]$ be a homogeneous polynomial of degree $d$ and strength $s$ and sufficiently high secondary strength.  Does $V(f)$ have nice arithmetic properties, similar to those of high strength polynomials as observed in~\cite{birch,cook-magyar,schmidt,xy,y}?  For example: does $V(f)$ satisfy the Hasse principle?
\end{question}

\begin{bibdiv}
\begin{biblist}

%

\bib{ananyan-hochster}{article}{
   author={Ananyan, Tigran},
   author={Hochster, Melvin},
   title={Small subalgebras of polynomial rings and Stillman's conjecture},
   journal={J. Amer. Math. Soc.},
   volume={33},
   date={2020},
   number={1},
   pages={291--309},
}

\bib{ah2}{article}{
   author={Ananyan, Tigran},
   author={Hochster, Melvin},
   title={Strength conditions, small subalgebras, and Stillman bounds in
   degree $\leq 4$},
   journal={Trans. Amer. Math. Soc.},
   volume={373},
   date={2020},
   number={7},
}

\bib{aspinwall}{article}{
   author={Aspinwall, Paul S.},
   title={Some applications of commutative algebra to string theory},
   conference={
      title={Commutative algebra},
   },
   book={
      publisher={Springer, New York},
   },
   date={2013},
   pages={25--56},
}

\bib{avramov-infinite}{article}{
   author={Avramov, Luchezar L.},
   title={Infinite free resolutions [MR1648664]},
   conference={
      title={Six lectures on commutative algebra},
   },
   book={
      series={Mod. Birkh\"{a}user Class.},
      publisher={Birkh\"{a}user Verlag, Basel},
   },
   date={2010},
   pages={1--118},
}

\bib{AB}{article}{
   author={Avramov, Luchezar L.},
   author={Buchweitz, Ragnar-Olaf},
   title={Homological algebra modulo a regular sequence with special
   attention to codimension two},
   journal={J. Algebra},
   volume={230},
   date={2000},
   number={1},
   pages={24--67},
}

\bib{ABI}{article}{
   author={Avramov, Luchezar L.},
   author={Buchweitz, Ragnar-Olaf},
   author={Iyengar, Srikanth},
   title={Class and rank of differential modules},
   journal={Invent. Math.},
   volume={169},
   date={2007},
   number={1},
   pages={1--35},
}

\bib{bfk}{article}{
   author={Ballard, Matthew},
   author={Favero, David},
   author={Katzarkov, Ludmil},
   title={A category of kernels for equivariant factorizations and its
   implications for Hodge theory},
   journal={Publ. Math. Inst. Hautes \'{E}tudes Sci.},
   volume={120},
   date={2014},
   pages={1--111},
}

\bib{bbov}{article}{
   author={Ballico, Edoardo},
   author={Bik, Arthur},
   author={Oneto, Alessandro},
   author={Ventura, Emanuele},
	title = {Strength and slice rank of forms are generically equal},
	journal = {Israel J. Math. (to appear)},
	year = {2021},
	note = {arXiv:2102.11549},
}

\bib{bbov2}{article}{
   author={Ballico, Edoardo},
   author={Bik, Arthur},
   author={Oneto, Alessandro},
   author={Ventura, Emanuele},
	title = {The set of forms with bounded strength is not closed},
	year = {2020},
	note = {arXiv:2012.01237},
}

\bib{bv}{article}{
   author={Ballico, Edoardo},
   author={Ventura, Emanuele},
	title = {The strength for line bundles},
	journal = {Mathematica Scandinavica (to appear)},
	year = {2020},
	note = {arXiv:2004.01586},
}

%
%

%
%

\bib{bdov}{article}{
   author={Bik, Arthur},
   author={Draisma, Jan},
   author={Oneto, Alessandro},
   author={Ventura, Emanuele},
   title={The monic rank},
   journal={Math. Comp.},
   volume={89},
   date={2020},
   number={325},
   pages={2481--2505},
}

\bib{bdde}{article}{
   author={Bik, Arthur},
   author = {Danelon, Alessandro},
   author={Draisma, Jan},
   author={Eggermont, Rob H.},
title = {Universality of high-strength tensors},
journal = {Vietnam J. Math.},
year = {2021},
}
		
\bib{bde}{article}{
   author={Bik, Arthur},
   author={Draisma, Jan},
   author={Eggermont, Rob H.},
   title={Polynomials and tensors of bounded strength},
   journal={Commun. Contemp. Math.},
   volume={21},
   date={2019},
   number={7},
}

\bib{bdes}{article}{
   author={Bik, Arthur},
   author={Draisma, Jan},
   author={Eggermont, Rob H.},
   author={Snowden, Andrew},
   title={The geometry of polynomial representations},
	note = {arXiv:2105.12621},
	}

\bib{birch}{article}{
   author={Birch, B. J.},
   title={Forms in many variables},
   journal={Proc. Roy. Soc. London Ser. A},
   volume={265},
   date={1961/62},
   pages={245--263},
}


\bib{blaser-eisenbud-schreyer}{article}{
   author={Bl\"{a}ser, Markus},
   author={Eisenbud, David},
   author={Schreyer, Frank-Olaf},
   title={Ulrich complexity},
   journal={Differential Geom. Appl.},
   volume={55},
   date={2017},
   pages={128--145},
}

\bib{be}{article}{
   author={Buchsbaum, David A.},
   author={Eisenbud, David},
   title={Algebra structures for finite free resolutions, and some structure
   theorems for ideals of codimension $3$},
   journal={Amer. J. Math.},
   volume={99},
   date={1977},
   number={3},
   pages={447--485},
}

\bib{bgs}{article}{
   author={Buchweitz, R.-O.},
   author={Greuel, G.-M.},
   author={Schreyer, F.-O.},
   title={Cohen-Macaulay modules on hypersurface singularities. II},
   journal={Invent. Math.},
   volume={88},
   date={1987},
   number={1},
   pages={165--182},
}

\bib{beh}{article}{
   author={Buchweitz, Ragnar-Olaf},
   author={Eisenbud, David},
   author={Herzog, J\"{u}rgen},
   title={Cohen-Macaulay modules on quadrics},
   conference={
      title={Singularities, representation of algebras, and vector bundles},
      address={Lambrecht},
      date={1985},
   },
   book={
      series={Lecture Notes in Math.},
      volume={1273},
      publisher={Springer, Berlin},
   },
   date={1987},
   pages={58--116},
}

\bib{cmpv}{article}{
   author={Caviglia, Giulio},
   author={Chardin, Marc},
   author={McCullough, Jason},
   author={Peeva, Irena},
   author={Varbaro, Matteo},
   title={Regularity of prime ideals},
   journal={Math. Z.},
   volume={291},
   date={2019},
   number={1-2},
   pages={421--435},
}

\bib{carlsson}{article}{
   author={Carlsson, Gunnar},
   title={Free $({\bf Z}/2)^k$-actions and a problem in commutative algebra},
   conference={
      title={Transformation groups, Pozna\'{n} 1985},
   },
   book={
      series={Lecture Notes in Math.},
      volume={1217},
      publisher={Springer, Berlin},
   },
   date={1986},
   pages={79--83},
}

\bib{cook-magyar}{article}{
   author={Cook, Brian},
   author={Magyar, \'{A}kos},
   title={Diophantine equations in the primes},
   journal={Invent. Math.},
   volume={198},
   date={2014},
   number={3},
   pages={701--737},
}

%
%
%

\bib{ds}{article}{
	author = {Danelon, Alessandro},
	author={Snowden, Andrew},
	title = {Isogeny Classes of Cubic Spaces},
	note = {arXiv: 2207.13951},
}

\bib{davenport-lewis}{article}{
   author={Davenport, H.},
   author={Lewis, D. J.},
   title={Non-homogeneous cubic equations},
   journal={J. London Math. Soc.},
   volume={39},
   date={1964},
   pages={657--671},
}

\bib{DES}{article}{
   author={Derksen, Harm},
   author={Eggermont, Rob H.},
   author={Snowden, Andrew},
   title={Topological noetherianity for cubic polynomials},
   journal={Algebra Number Theory},
   volume={11},
   date={2017},
   number={9},
   pages={2197--2212},
}

\bib{draisma}{article}{
   author={Draisma, Jan},
   title={Topological Noetherianity of polynomial functors},
   journal={J. Amer. Math. Soc.},
   volume={32},
   date={2019},
   number={3},
   pages={691--707},
}

\bib{eisenbud}{article}{
   author={Eisenbud, David},
   title={Homological algebra on a complete intersection, with an
   application to group representations},
   journal={Trans. Amer. Math. Soc.},
   volume={260},
   date={1980},
   number={1},
   pages={35--64},
}

\bib{eps}{article}{
   author={Eisenbud, David},
   author={Peeva, Irena},
   author={Schreyer, Frank-Olaf},
   title={Quadratic complete intersections},
   journal={J. Algebra},
   volume={571},
   date={2021},
   pages={15--31},
}

\bib{imperfect}{article}{
   author={Erman, Daniel},
   author={Sam, Steven V},
   author={Snowden, Andrew},
   title={Big Polynomial Rings with Imperfect Coefficient Fields},
   journal={Michigan Math. J.},
   volume={70},
   date={2021},
   number={3},
   pages={649--672},
}
		
\bib{genstillman}{article}{
   author={Erman, Daniel},
   author={Sam, Steven V.},
   author={Snowden, Andrew},
   title={Generalizations of Stillman's Conjecture via Twisted Commutative
   Algebra},
   journal={Int. Math. Res. Not. IMRN},
   date={2021},
   number={16},
   pages={12281--12304},
}
		
\bib{hartshorne}{article}{
   author={Erman, Daniel},
   author={Sam, Steven V.},
   author={Snowden, Andrew},
   title={Strength and Hartshorne's conjecture in high degree},
   journal={Math. Z.},
   volume={297},
   date={2021},
   number={3-4},
   pages={1467--1471},
}
		
\bib{cohom}{article}{
   author={Erman, Daniel},
   author={Sam, Steven V.},
   author={Snowden, Andrew},
   title={Small projective spaces and Stillman uniformity for sheaves},
   journal={Algebr. Geom.},
   volume={8},
   date={2021},
   number={3},
   pages={374--388},
}
%
		
\bib{stillman}{article}{
   author={Erman, Daniel},
   author={Sam, Steven V.},
   author={Snowden, Andrew},
   title={Big polynomial rings and Stillman's conjecture},
   journal={Invent. Math.},
   volume={218},
   date={2019},
   number={2},
   pages={413--439},
}
		
\bib{bulletin}{article}{
   author={Erman, Daniel},
   author={Sam, Steven V.},
   author={Snowden, Andrew},
   title={Cubics in 10 variables vs. cubics in 1000 variables: uniformity
   phenomena for bounded degree polynomials},
   journal={Bull. Amer. Math. Soc. (N.S.)},
   volume={56},
   date={2019},
   number={1},
   pages={87--114},
}

\bib{G1}{article}{
   author={Gulliksen, Tor H.},
   title={A change of ring theorem with applications to Poincar\'{e} series and
   intersection multiplicity},
   journal={Math. Scand.},
   volume={34},
   date={1974},
   pages={167--183},
}

\bib{HS}{article}{
	author={Harman, Nate},
	author={Snowden,Andrew},
	title = {Ultrahomogeneous tensor spaces},
	note = {arXiv:2207.09626},
}

\bib{hmp}{article}{
   author={Harris, Joe},
   author={Mazur, Barry},
   author={Pandharipande, Rahul},
   title={Hypersurfaces of low degree},
   journal={Duke Math. J.},
   volume={95},
   date={1998},
   number={1},
   pages={125--160},
}

\bib{hartshorne-vb}{article}{
   author={Hartshorne, Robin},
   title={Algebraic vector bundles on projective spaces: a problem list},
   journal={Topology},
   volume={18},
   date={1979},
   number={2},
   pages={117--128},
}

\bib{iyengar-walker}{article}{
   author={Iyengar, Srikanth B.},
   author={Walker, Mark E.},
   title={Examples of finite free complexes of small rank and small
   homology},
   journal={Acta Math.},
   volume={221},
   date={2018},
   number={1},
   pages={143--158},
}


\bib{kz-ranks}{article}{
   author={Kazhdan, David},
   author={Ziegler, Tamar},
   title={On ranks of polynomials},
   journal={Algebr. Represent. Theory},
   volume={21},
   date={2018},
   number={5},
   pages={1017--1021},
}

\bib{kz-high-rank}{article}{
   author={Kazhdan, David},
   author={Ziegler, Tamar},
   title={Properties of High Rank Subvarieties of Affine Spaces},
   journal={Geom. Funct. Anal.},
   volume={30},
   date={2020},
   number={4},
   pages={1063--1096},
}

\bib{kp1}{article}{
   author={Kazhdan, David},
   author = {Polishchuk, Alexander},
   title = {Schmidt rank and singularities},
   note = {arXiv:2104.10198},
   }
   
\bib{kp2}{article}{
   author={Kazhdan, David},
   author = {Polishchuk, Alexander},
   title = {Linear subspaces of minimal codimension in hypersurfaces},
   note = {arXiv:2107.08080},
   }   

\bib{kp3}{article}{
   author={Kazhdan, David},
   author = {Polishchuk, Alexander},
   title = {Schmidt rank of quartics over perfect fields},
   note = {arXiv:2110.10244},
   }

\bib{knorrer}{article}{
   author={Kn\"{o}rrer, Horst},
   title={Cohen-Macaulay modules on hypersurface singularities. I},
   journal={Invent. Math.},
   volume={88},
   date={1987},
   number={1},
   pages={153--164},
}

\bib{MP-infinite}{article}{
   author={McCullough, Jason},
   author={Peeva, Irena},
   title={Infinite graded free resolutions},
   conference={
      title={Commutative algebra and noncommutative algebraic geometry. Vol.
      I},
   },
   book={
      series={Math. Sci. Res. Inst. Publ.},
      volume={67},
      publisher={Cambridge Univ. Press, New York},
   },
   date={2015},
   pages={215--257},}

\bib{mrr1}{article}{
   author={Mohan Kumar, N.},
   author={Rao, A. P.},
   author={Ravindra, G. V.},
   title={Arithmetically Cohen-Macaulay bundles on hypersurfaces},
   journal={Comment. Math. Helv.},
   volume={82},
   date={2007},
   number={4},
   pages={829--843},
}
		
\bib{mrr2}{article}{
   author={Mohan Kumar, N.},
   author={Rao, A. P.},
   author={Ravindra, G. V.},
   title={Arithmetically Cohen-Macaulay bundles on three dimensional
   hypersurfaces},
   journal={Int. Math. Res. Not. IMRN},
   date={2007},
   number={8},
}

\bib{orlov}{article}{
   author={Orlov, Dmitri},
   title={Derived categories of coherent sheaves and triangulated categories
   of singularities},
   conference={
      title={Algebra, arithmetic, and geometry: in honor of Yu. I. Manin.
      Vol. II},
   },
   book={
      series={Progr. Math.},
      volume={270},
      publisher={Birkh\"{a}user Boston, Boston, MA},
   },
   date={2009},
   pages={503--531},
}

\bib{rt1}{article}{
   author={Ravindra, G. V.},
   author={Tripathi, Amit},
   title={Rank 3 ACM bundles on general hypersurfaces in $\Bbb{P}^5$},
   journal={Adv. Math.},
   volume={355},
   date={2019},
   pages={106780, 33},
}
		
\bib{rt2}{article}{
   author={Ravindra, Girivaru V.},
   author={Tripathi, Amit},
   title={Remarks on higher-rank ACM bundles on hypersurfaces},
   language={English, with English and French summaries},
   journal={C. R. Math. Acad. Sci. Paris},
   volume={356},
   date={2018},
   number={11-12},
   pages={1215--1221},
}

\bib{rt3}{article}{
   author={Ravindra, Girivaru V.},
   author={Tripathi, Amit},
   title={On the base case of a conjecture on ACM bundles over hypersurfaces},
	note = {In press},
}

\bib{schoutens}{book}{
   author={Schoutens, Hans},
   title={The use of ultraproducts in commutative algebra},
   series={Lecture Notes in Mathematics},
   volume={1999},
   publisher={Springer-Verlag, Berlin},
   date={2010},
   pages={x+204},
 }

\bib{schmidt}{article}{
   author={Schmidt, Wolfgang M.},
   title={The density of integer points on homogeneous varieties},
   journal={Acta Math.},
   volume={154},
   date={1985},
   number={3-4},
   pages={243--296},
}

\bib{shamash}{article}{
   author={Shamash, Jack},
   title={The Poincar\'{e} series of a local ring},
   journal={J. Algebra},
   volume={12},
   date={1969},
   pages={453--470},
}

\bib{snowden-relative}{misc}{
	author={Snowden, Andrew},
	title={Relative Big Polynomial Rings},
	journal={J. Comm. Alg. (to appear)},
	date={2020},
}

\bib{stacks}{misc}{
label={Stacks},
  author       = {The {Stacks Project Authors}},
  title        = {Stacks Project},
  year         = {2017},
  note = {\url{http://stacks.math.columbia.edu}},
}

\bib{tripathi}{article}{
   author={Tripathi, Amit},
   title={Rank 3 arithmetically Cohen-Macaulay bundles over hypersurfaces},
   journal={J. Algebra},
   volume={478},
   date={2017},
   pages={1--11},
}

 \bib{van-den-Dries-schmidt}{article}{
    author={van den Dries, L.},
    author={Schmidt, K.},
    title={Bounds in the theory of polynomial rings over fields. A
    nonstandard approach},
    journal={Invent. Math.},
    volume={76},
    date={1984},
    number={1},
    pages={77--91},
 }

\bib{walker}{article}{
   author={Walker, Mark E.},
   title={Total Betti numbers of modules of finite projective dimension},
   journal={Ann. of Math. (2)},
   volume={186},
   date={2017},
   number={2},
   pages={641--646},
}

\bib{xy}{article}{
   author={Xiao, Stanley Yao},
   author={Yamagishi, Shuntaro},
   title={Zeroes of polynomials with prime inputs and Schmidt's
   $h$-invariant},
   journal={Canad. J. Math.},
   volume={72},
   date={2020},
   number={3},
   pages={805--833},
}
		
\bib{y}{article}{
   author={Yamagishi, Shuntaro},
   title={Prime solutions to polynomial equations in many variables and
   differing degrees},
   journal={Forum Math. Sigma},
   volume={6},
   date={2018},
}
	
\end{biblist}
\end{bibdiv}
\end{document}